\documentclass[12pt]{amsart}
\usepackage{color,graphicx,array, amssymb, amscd,slashed}
\newtheorem{Theorem}{Theorem}[section]

\newtheorem{Proposition}[Theorem]{Proposition}
\newtheorem{Corollary}[Theorem]{Corollary}
\theoremstyle{definition}
\newtheorem{Definition}{Definition}
\theoremstyle{remark}
\newtheorem{Remark}[Theorem]{Remark} 
\numberwithin{equation}{section}
\setlength{\parindent}{0in}
\setlength{\parskip}{0.1in}
\setlength{\oddsidemargin}{0in}
\setlength{\evensidemargin}{0in}
\setlength{\textwidth}{6.5in}
\setlength{\textheight}{9.in}
\setlength{\topmargin}{-0.5in}
\newcommand{\R}{\mathbb R}
\newcommand{\C}{\mathbb C}

\newcommand{\D}{\mathbb D}
\newcommand{\Q}{\mathcal Q}
\newcommand{\RP}{\mathbb P^3}
\newcommand{\SL}{{\rm SL}_4 \mathbb R}
\renewcommand{\sl}{{\rm sl}_4 \mathbb R}
\newcommand{\SLC}{{\rm SL}_4 \mathbb C}
\newcommand{\GKO}{\SL/K_2}
\newcommand{\GKSO}{\SL/{\rm SO}_{2, 2}}
\newcommand{\GKOT}{\SL/K_1}
\newcommand{\GKS}{\SLC/K}
\newcommand{\GKOTC}{\SLC/K_1^{\mathbb C}}
\newcommand{\gs}{g_2}
\newcommand{\gf}{g_1}
\newcommand{\js}{J_2}
\newcommand{\jf}{J_1}

\newcommand{\LSLO}{\Lambda {\rm SL}_4 \mathbb R_{\tau_2}}
\newcommand{\LSLT}{\Lambda {\rm SL}_4 \mathbb R_{\kappa}}
\newcommand{\ad}{\operatorname{Ad}}
\newcommand{\di}{\operatorname{diag}}
\newcommand{\offdi}{\operatorname{offdiag}}
\newcommand{\tr}{\operatorname{Tr}}

\begin{document}
\title{A loop group method for Demoulin surfaces 
 in the $3$-dimensional real projective space}
\author[S.-P.~Kobayashi]{Shimpei Kobayashi}
 \address{
 Graduate School of Science and Technology,
 Hirosaki University,
 Hirosaki, 036-8561, Japan}
 \email{shimpei@cc.hirosaki-u.ac.jp}
 \thanks{The author is partially supported by Kakenhi 
 23740042}
\subjclass[2010]{Primary~53A20; Secondary~53C43, 37K10}
\keywords{Projective differential geometry;  Demoulin surfaces; integrable systems}
\date{\today}
\pagestyle{plain}
\begin{abstract} 
 For a surface in the $3$-dimensional real projective space,
 we define a Gauss map, 
 which is a quadric in $\mathbb R^{4}$ and 
 called the first-order Gauss map. 
 It will be shown that the surface is a Demoulin surface 
 if and only if the first-order Gauss map is conformal, 
 and the surface is a projective minimal 
 coincidence surface or a Demoulin surface  
 if and only if the first-order Gauss map is harmonic.
 Moreover for a Demoulin surface, it will be shown that 
 the first-order Gauss map can be obtained by 
 the natural projection of the Lorentz primitive map into 
 a $6$-symmetric space.
 We also characterize Demoulin surfaces via 
 a family of flat connections 
 on the trivial bundle $\D \times \SL$ over a simply connected 
 domain $\mathbb{D}$ in the Euclidean $2$-plane. 
\end{abstract}
\maketitle
\section*{Introduction}
 Curves and surfaces in the $3$-dimensional 
 real projective space $\RP$ were the central theme 
 of differential geometry in 19th century.
 Especially, various transformations for a surface in $\RP$ were 
 introduced by Darboux, Demoulin, Titzeica, Godeaux, Rozet,
 Wilczynski, etc., and their properties were extensively studied. 
 The most prominent features of the theory of transformations 
 were the {\it Laplace sequence}  and the {\it line/sphere congruences}
 of a surface. It is well known that Toda equations which 
 discovered in the theory of integrable systems in 1970s
 had been already known as the periodic Laplace sequence, and 
 the classical Darboux and B\"acklund transformations were 
  defined by sphere 
 congruences and tangential line congruences, respectively.

 The Demoulin surface is characterized by 
 the coincidence of general four Demoulin transformations 
 of a surface, which are given by the envelopes of Lie quadrics.
 On the one hand, the projective minimal surface
 is defined by a critical point of the projective area 
 functional. It is known that 
 Demoulin surfaces give a special class of projective 
 minimal surfaces. Moreover, using Pl\"ucker embedding from 
 $\RP$ to $\mathbb P^5$,  Godeaux introduced an analogue 
 of the Laplace sequence, the so-called {\it Godeaux sequence}
 of a surface in $\mathbb P^5$. Then the surface is a Demoulin 
 surface if and only if the Godeaux sequence is six periodic. For 
 the modern treatment of the subject, 
 we refer the readers to \cite{Sa}.  
 
 By using modern theory of integrable systems   
 and differential geometry of harmonic maps, projective 
 minimal surfaces and Demoulin surfaces were investigated 
 in \cite{Fe, FS, BHJ}.
 More precisely in \cite{BHJ}, through Pl\"ucker 
 embedding from $\RP$ to $\mathbb P^5$ projective minimal surfaces 
 were characterized by Lorentz harmonicity of the conformal Gauss map, 
 which takes values in a certain indefinite Grassmannian.
 In \cite{FS}, Demoulin surfaces were characterized by 
  a certain Toda equation and the B\"acklund transformation 
 of a Demoulin surface was constructed. Moreover, many classes 
 of surfaces characterized by geometric properties were 
 related to various integrable systems in \cite{Fe}.
  
 In this paper, we study Demoulin surfaces via a loop group method.
 We first define a Gauss map for a surface in $\RP$,
 which is a quadric in $\R^4$, 
 and called the {\it first-order Gauss map}.
 The first-order Gauss map has the first-order contact 
 to the surface.
 It will be shown that the first-order Gauss map 
 is conformal if and only if the surface is a Demoulin surface, 
 see Proposition \ref{prop:conformality}.

 Then the Lorentz harmonicity
 of the first-order Gauss map is studied.
 It will be shown that 
 the first-order Gauss map is Lorentz harmonic if and only if 
 the surface is a Demoulin or a projective minimal 
 coincidence surface, see Theorem \ref{thm:Demoulin}. 
 We note that coincidence surfaces are 
 simple examples of a class of surfaces which have 
 nontrivial projective deformations, the so-called 
 {\it projective applicable surfaces}.
 Since the target space of the first-order Gauss map 
 is a symmetric space, the Lorentz harmonic map is also characterized 
 by a family of flat connections on the trivial bundle 
 $\D \times  \SL$. 

 Combining 
 the results in Proposition \ref{prop:conformality} and Theorem 
 \ref{thm:Demoulin}, we see that the first-order Gauss map 
 is conformal Lorentz harmonic if and only if 
 the surface is a Demoulin surface, 
 see Corollary \ref{coro:Demoulin}.
 Finally it will be shown that 
 the Gauss map of a Demoulin surface can be obtained 
 by the natural 
 projection of the Lorentz primitive map into a $6$-symmetric space, 
 see Theorem \ref{coro:primitivity}.

 In Appendix \ref{ap:confGauss}, we review results of Thomsen 
 \cite{Th}, that is, a surface is projective minimal if and only 
 if the conformal Gauss map is conformal Lorentz harmonic. 
 In \cite[Theorem 7]{BHJ}, 
 by using Pl\"ucker embedding 
 from $\RP$ to $\mathbb P^5$, the conformal Gauss map 
 can be considered as the map into a certain indefinite Grassmannian 
 in $\R^6$ and the Thomsen's theorem was proved.
 Theorem \ref{thm:projmini} is another reformulation of it.

\textbf{Acknowledgements:} 
 The author would like to express his sincere thanks to 
 Prof. T.~Sasaki and Prof. J.~Inoguchi for their helpful 
 discussion and comments on the draft of this paper.
 The author also would like to express his sincere thanks to 
 the anonymous referee for his/her careful reading and 
 critical comments for the original version of this paper.
\section{Preliminaries}
\subsection{Surfaces in $\RP$ and the Wilczynski frames}
 The {\it canonical system} of a surface $S$ in 
 the $3$-dimensional real projective space $\RP$ 
 is given as follows, \cite{Wil}, \cite[Section 2.2]{Sa}:
 \begin{equation}\label{eq:canonicalsystem}
 f_{x x} = b f_y + p f, \;\;
 f_{y y} = c f_x + q f,
 \end{equation}
 where $f$ is a lift of $S$ in $\R^4 \setminus \{\mathbf 0\}$, 
 $b, c, p$ and $q$ are functions of 
 real variables $x$ and $y$, and 
 the subscripts $x$ and $y$ denote the partial derivative 
 with respect to $x$ and $y$, respectively. Let 
 $f=(f^0, f^1, f^2, f^3)^t \in \R^4 \setminus \{\mathbf  0\}$ and assume that $f^0 \neq 0$. 
 Then the surface $S$ is given by $S = \tfrac{1}{f^0}(f^1, f^2, f^3)^t$
 and a straightforward computation shows that
\begin{align*}
  S_{xx} = b S_y -2 (\log f^0)_x S_x \;\;\mbox{and}\;\;  
  S_{yy} = c S_x -2 (\log f^0)_y S_y.
\end{align*}
 This implies that $x$ and $y$ are asymptotic coordinates on $S$.
 Thus the coordinates $(x, y)$ induce the Lorentz structure on 
 the surface $S$.
 It is known that $8 bc \>dx dy$ is 
 an absolute invariant symmetric quadratic form, 
 which is called the {\it projective metric} 
 and $8bc$ is called the {\it Fubini-Pick invariant} 
 of a surface $S$. It is also known that the conformal 
 class of $b \>dx^3 + c \>dy^3$ is an absolute 
 invariant cubic form.
 It is known that a surface whose 
 Fubini-Pick invariant $8 b c =0$ is ruled, thus we 
 assume that $b c \neq 0$. 
 Then the 
 {\it Wilczynski frame}  is defined as follows:
 $$
 F= (f, f_1, f_2, \eta ),
 $$ 
 where 
\begin{align*}
 f_1 &= f_x  - \frac{c_x}{2 c} f, \;\; f_2 = f_y  - 
 \frac{b_y}{2 b} f, \\
 \eta &= f_{xy}  - \frac{c_x}{2 c} f_y - 
 \frac{b_y}{2 b} f_x + \left(\frac{b_y c_x}{4 b c}-\frac{bc}{2}\right)f.
\end{align*}
 Then a straightforward computation shows that 
 the Wilczynski frame $F$ satisfies the following equations:
\begin{equation}\label{eq:movingframe}
 F_x = F U \;\;\mbox{and} \;\;F_y = F V,
\end{equation}
 where 
\begin{equation}\label{eq:movingframeUV}
 U= 
 \begin{pmatrix}
 \frac{c_x}{2 c} & P & k  &
  b Q  \\[0.1cm]
 1  & -\frac{c_x}{2 c} & 0 & k \\[0.1cm]
 0&b&\frac{c_x}{2 c} & P \\[0.1cm]
 0 & 0 & 1 &-\frac{c_x}{2 c}
 \end{pmatrix}, \;\;
 V= 
 \begin{pmatrix}
 \frac{b_y}{2 b} & \ell & 
 Q  & c P \\[0.1cm]
 0  & \frac{b_y}{2 b} & c & Q \\[0.1cm]
 1&0& -\frac{b_y}{2 b} & \ell \\[0.1cm]
 0 & 1 & 0 &-\frac{b_y}{2 b}
 \end{pmatrix}.
\end{equation}
 Here we introduced functions $k, \ell, P$ and $Q$ of 
 two variables $x$ and $y$ as follows:
\begin{align}
 k &= \frac{bc  - (\log b)_{x y}}{2},\;\;\; 
 \ell = \frac{bc  - (\log c)_{xy}}{2}, \label{eq:kell}\\
 P &= p + \frac{b_y}{2} -\frac{c_{x x}}{2 c} + \frac{c_x^2}{4 c^2},
 \;\;\;Q  = q + \frac{c_x}{2} - \frac{b_{yy}}{2 b}+ \frac{b_y^2}{4 b^2}.
 \label{eq:QandP}
\end{align}
 The compatibility conditions of \eqref{eq:movingframe} are 
 \begin{align}
  Q_x = k_y + k \frac{b_y}{b}, \;\; P_y = \ell_x +  \ell\frac{c_x}{c},
 \label{eq:comp1}\\
 b Q_y + 2 b_y Q = c P_x+ 2 c_x P. \label{eq:comp2}
 \end{align}
 These equations are nothing but the projective Gauss-Codazzi equations
 of a surface $S$. Since the traces of $U$ and $V$ are zero, 
 the Wilczynski frame 
 $F$ takes values in $\SL$ up to initial condition.
 From now on, we assume that the Wilczynski frame $F$ 
 takes values in $\SL$.

\begin{Remark}
 Instead of real coordinates $(x, y)$, one can 
 use the complex coordinates $(z, \bar z)$ with $z = x+ i y$. 
 Then  the induced conformal structure is Riemannian and 
 the following discussion is parallel to the case of 
 real coordinates. However, for simplicity, we consider 
 only the case of real coordinates.
\end{Remark}

\subsection{Projective minimal surfaces and Demoulin surfaces}
 It is known that the {\it projective minimal surface} 
 is defined by a critical point of the projective area functional:
 $$
  \int b c \> dx dy,
 $$
 where the functions $b$ and $c$ are defined 
 in \eqref{eq:canonicalsystem}. Then the projective minimality 
 can be computed as in \cite{Th}:
 \begin{equation}\label{eq:projmin}
 b Q_y + 2 b_y Q  =0 \;\;\mbox{and}\;\;
 c P_x + 2 c_x P =0,
 \end{equation}
 where the functions $P$ and $Q$ are defined in \eqref{eq:QandP}.

 The {\it Demoulin surface} is defined by  the coincidence of general 
 four Demoulin transformations of a surface, which are given 
 by the envelopes of Lie quadrics. It is known that 
 Demoulin surfaces are characterized by the functions 
 $P$ and $Q$ in \eqref{eq:QandP}, 
 see \cite[Definition 2.8]{Sa}:
 \begin{equation}\label{eq:Demoulin}
 P =Q=0.
 \end{equation}
 \begin{Remark}
 From the equations in \eqref{eq:projmin} and \eqref{eq:Demoulin},
 it is easy to see that 
 Demoulin surfaces are projective minimal surfaces.
 \end{Remark}

 \subsection{(Lorentz) Harmonic and (Lorentz) primitive maps into ($k$-)symmetric spaces}
 It is known that the loop group method 
 can be applied to harmonic maps from 
 surfaces into symmetric spaces, see \cite{BP, DPW}. 
 Let $M$ and $N$ be a Riemann (or Lorentz) surface and a semisimple 
 symmetric space, respectively 
 and $\varphi$ a map from $M$ into $N$.
 We denote the symmetric space $N$ as quotient $G/K$ with 
 semisimple Lie group $G$ and
 closed subgroup $K$ of $G$ such that 
 $(G_\sigma)_o \subseteq K \subset G_\sigma$, 
 where $(G_\sigma)_o$ is the identity component of 
 the fixed point group $G_\sigma$ 
 of the involution $\sigma$ of the symmetric space $N$. 
 Let $\Phi$ be the frame of $\varphi$ taking values in $G$ and
 $\alpha =\Phi^{-1} d \Phi$ the Maurer-Cartan form.
 According to  the eigenspace decomposition of $\mathfrak g$
 with respect to the derivative of $\sigma$, that is 
 $\mathfrak g = \mathfrak k \oplus \mathfrak p$, 
 we define $\alpha^{\lambda}$ as follows:
 $$
 \alpha^{\lambda} = \alpha_{\mathfrak k} + 
 \lambda^{-1} \alpha_{\mathfrak p}^{\prime} +
 \lambda \alpha_{\mathfrak p}^{\prime \prime},\;\; 
 \lambda \in \C^{\times},
 $$
 where $\alpha_{\mathfrak k}$ and $\alpha_{\mathfrak p}$
 denote the $\mathfrak k$- and $\mathfrak p$-parts, and 
 $\prime$ and $\prime \prime$ denote the $(1, 0)$- and 
 $(0, 1)$-parts, respectively.
 \begin{Remark}
 For a Riemann surface $M$ with conformal coordinates $z = x + iy$,
 the $(1, 0)$- and $(0, 1)$-parts denote $dz$ and $d \bar z$ parts, 
 respectively, and  for a Lorentz surface $M$ with null coordinates 
 $(x, y)$, the $(1, 0)$- and $(0, 1)$-parts denote $dx$ and $d y$ parts, 
 respectively. 
 \end{Remark}
 The following theorem is a fundamental fact about 
 (Lorentz) harmonic maps from surfaces into symmetric spaces, 
 see \cite{BP, DPW}.
\begin{Theorem}\label{thm:harmonicflat}
 Let $M$ be a Riemann (or Lorentz) surface and $N$ 
 a semisimple symmetric space.
 A map $\varphi : M \to N$ 
 is a (Lorentz) harmonic map if and only if 
 $d + \alpha^{\lambda}$ is a family of flat connections.
\end{Theorem}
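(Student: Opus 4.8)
The plan is to reduce the statement to a purely Lie-algebraic computation using the Maurer–Cartan equation. First I would record that, since $\Phi$ is the frame of $\varphi$, the form $\alpha = \Phi^{-1}d\Phi$ satisfies the zero-curvature (Maurer–Cartan) equation $d\alpha + \tfrac12[\alpha\wedge\alpha] = 0$ identically, and conversely any $\mathfrak g$-valued one-form satisfying this equation integrates to a frame on the simply connected domain. The equation of flatness of $d + \alpha^\lambda$ is $d\alpha^\lambda + \tfrac12[\alpha^\lambda\wedge\alpha^\lambda] = 0$, which must now be analyzed as an identity in $\lambda$.

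The key step is to expand $d\alpha^\lambda + \tfrac12[\alpha^\lambda\wedge\alpha^\lambda]$ as a Laurent polynomial in $\lambda$ and collect coefficients. Writing $\alpha = \alpha_{\kk} + \alpha_{\p}'+\alpha_{\p}''$ and using the symmetric-space bracket relations $[\kk,\kk]\subseteq\kk$, $[\kk,\p]\subseteq\p$, $[\p,\p]\subseteq\kk$ coming from $d\sigma$, the powers $\lambda^{-2}, \lambda^{-1}, \lambda^0, \lambda^1, \lambda^2$ separate cleanly. The $\lambda^{\pm 2}$ coefficients give $[\alpha_{\p}'\wedge\alpha_{\p}'] = 0$ and $[\alpha_{\p}''\wedge\alpha_{\p}''] = 0$, which hold automatically for a $(1,0)$- (resp.\ $(0,1)$-) form on a surface since such a form is proportional to a single $dx$ (resp.\ $dy$), so its self-bracket vanishes for type reasons; likewise the $\lambda^0$ coefficient reproduces exactly the original Maurer–Cartan equation for $\alpha$ after one checks the $\p$-type and $\kk$-type pieces match. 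The remaining $\lambda^{\pm 1}$ coefficients collapse to the single condition
\begin{equation*}
 d''\alpha_{\p}' + d'\alpha_{\p}'' + [\alpha_{\kk}\wedge\alpha_{\p}'] + [\alpha_{\kk}\wedge\alpha_{\p}''] = 0
\end{equation*}
together with its conjugate, and this is precisely the Euler–Lagrange equation asserting that $\varphi$ is (Lorentz) harmonic, i.e.\ that the $\p$-part of $\alpha$ is a (co)closed section in the appropriate sense. Thus flatness of the loop of connections for all $\lambda\in\C^\times$ is equivalent to the conjunction of the integrability (Maurer–Cartan) equation — automatic since $\Phi$ exists — and the harmonic map equation.

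The main obstacle is bookkeeping rather than conceptual: one must be careful that the splitting into $(1,0)$ and $(0,1)$ parts interacts correctly with the $\kk/\p$ splitting, so that the cross terms land in the claimed $\lambda$-powers, and one must verify that ``harmonic'' as defined via the tension field for the symmetric-space metric genuinely coincides with the first-order PDE extracted from the $\lambda^{\pm1}$ coefficients; this uses that for a symmetric space the Levi-Civita connection is encoded by $\alpha_{\kk}$ and that the second fundamental form of the Cartan immersion is governed by the $\p$-part. Since this equivalence is classical (Pohlmeyer, Uhlenbeck; see \cite{BP, DPW}), I would present the coefficient computation in full and then cite these references for the identification of the extracted PDE with harmonicity, rather than redeveloping the variational calculus here.
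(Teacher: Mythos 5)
The paper does not actually prove Theorem \ref{thm:harmonicflat}; it is stated as a known fact with a citation to \cite{BP, DPW}. Your proposal supplies the standard proof from those references (Laurent-expand the Maurer--Cartan equation of $\alpha^{\lambda}$ in $\lambda$ and compare coefficients), so in that sense you are doing more than the paper does, and your overall strategy is the right one: the $\lambda^{\pm 2}$ coefficients vanish for type reasons on a surface, the $\lambda^{0}$ coefficient is the $\kk$-part of the Maurer--Cartan equation of $\alpha$, and the residual conditions at $\lambda^{\pm 1}$ encode harmonicity.

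There is, however, one concrete slip in the displayed equation. As written,
\begin{equation*}
 d''\alpha_{\p}' + d'\alpha_{\p}'' + [\alpha_{\kk}\wedge\alpha_{\p}'] + [\alpha_{\kk}\wedge\alpha_{\p}''] = 0
\end{equation*}
is exactly $d\alpha_{\p} + [\alpha_{\kk}\wedge\alpha_{\p}] = 0$, i.e.\ the $\p$-part of the Maurer--Cartan equation of $\alpha$, which holds automatically because $\Phi$ exists; it carries no information about harmonicity. What flatness for all $\lambda$ actually forces is that the $\lambda^{-1}$ and $\lambda^{+1}$ coefficients vanish \emph{separately}:
\begin{equation*}
 d''\alpha_{\p}' + [\alpha_{\kk}''\wedge\alpha_{\p}'] = 0 \quad\text{and}\quad d'\alpha_{\p}'' + [\alpha_{\kk}'\wedge\alpha_{\p}''] = 0 .
\end{equation*}
Their sum is the (automatic) $\p$-part of Maurer--Cartan, while their difference is $d(\ast\alpha_{\p}) + [\alpha_{\kk}\wedge\ast\alpha_{\p}] = 0$, and it is this last equation that is the tension-field (harmonic map) equation for maps into a symmetric space. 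So you should replace the displayed ``single condition'' by the pair above (or by the difference equation), noting that modulo the Maurer--Cartan equation either one of the pair is equivalent to harmonicity. With that correction, and the cited identification of $d(\ast\alpha_{\p}) + [\alpha_{\kk}\wedge\ast\alpha_{\p}]=0$ with the vanishing of the tension field via the Cartan immersion, the argument is complete and matches the treatment in \cite{BP, DPW}.
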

 If the target manifold $N$ is a semisimple $k$-symmetric space ($k>2$), 
 then there does not exist a loop group formulation 
 for general (Lorentz) harmonic maps from a surface into $N$ 
 as the above. 
 Instead, we restrict our attention to a rather 
 special kind of (Lorentz) harmonic maps, 
 the {\it (Lorentz) primitive maps}
 in a $k$-symmetric space, so that the loop group formulation 
 can be applied.
\begin{Definition}
 Let $\varphi$ be a map from a Riemann or Lorentz surface $M$ into 
 a semisimple $k$-symmetric space $N =G/K$ with the order 
 $k$ automorphism $\sigma$ ($k>2$) and $\alpha = \Phi^{-1} d\Phi$ 
 the Maurer-Cartan form of the frame $\Phi$ of $\varphi$.
 Moreover, let 
 $\mathfrak g = \mathfrak g_0 
 \oplus \mathfrak g_1  \oplus \mathfrak g_2 \oplus \cdots \oplus
 \mathfrak g_{k-1}$ be the eigenspace decomposition of $\mathfrak g$
 and 
 $\mathfrak g^{\C} = \mathfrak g_0^{\C} 
 \oplus \mathfrak g_1^{\C}  \oplus \mathfrak g_2^{\C} 
 \oplus \cdots \oplus
 \mathfrak g_{k-1}^{\C}$ the eigenspace decomposition of the 
 complexification of $\mathfrak g$ 
 according to the derivative of $\sigma$ and define 
 $\mathfrak g_{i +kn} =\mathfrak g_i$ and 
 $\mathfrak g_{i +kn}^{\C} =\mathfrak g_i^{\C}$ 
 for $n \in \mathbb Z$.
 For the case of Riemann surface $M$, $\varphi$ is called the 
 {\it primitive map} if 
 \begin{equation}\label{eq:primitivedef}
 \mbox{$\alpha^{\prime}$ takes values in 
 $\mathfrak g_0^{\C} \oplus\mathfrak g_{-1}^{\C}$},
 \end{equation}
 where $\prime$ is the $(1, 0)$-part with respect to 
 the conformal structure on the Riemann surface $M$.
 For the case of Lorentz surface $M$, $\varphi$ is called the 
 {\it Lorentz primitive map} if 
 \begin{equation}\label{eq:Lorentzprimitivedef}
 \mbox{$\alpha^{\prime}$ takes values in 
 $\mathfrak g_0 \oplus\mathfrak g_{-1}$,
 and $\alpha^{\prime \prime}$ takes values in 
 $\mathfrak g_0 \oplus\mathfrak g_{1}$},
 \end{equation}
 where $\prime$ and $\prime \prime$ are the $(1, 0)$- and 
 $(0, 1)$-parts with respect to the conformal structure on 
 the Lorentz surface $M$, 
 respectively.
\end{Definition}
 The following is a basic fact about (Lorentz) primitive maps, 
 see \cite{Black}.
\begin{Proposition}
\mbox{}
\begin{enumerate}
\item A (Lorentz) primitive map into a semisimple $k$-symmetric space $N$
 ($k>2$) is (Lorentz) {\it equiharmonic}, that is, it is 
 (Lorentz) harmonic with respect to any 
 invariant metric on $N$.

\item
 Let $\varphi$ be a (Lorentz) 
 primitive map into a semisimple $k$-symmetric space $N=G/K$, ($k>2$),
 and $\pi :N \to G/H$ with $K \subset H$ 
 the homogeneous projection. Then 
 $\pi \circ \varphi$ is (Lorentz) equiharmonic.
\end{enumerate}
\end{Proposition}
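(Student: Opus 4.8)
The plan is to reduce everything to the Euler--Lagrange (tension--field) equation and to exploit the fact that primitivity kills essentially all of the nonlinear terms. Throughout write $\alpha=\Phi^{-1}d\Phi$ and use the $k$-grading $\mathfrak g=\bigoplus_{j\in\mathbb Z_k}\mathfrak g_j$ (with $[\mathfrak g_i,\mathfrak g_j]\subseteq\mathfrak g_{i+j}$ and $\mathfrak g_0=\mathfrak k$), so that $\alpha=\alpha_{\mathfrak k}+\beta$ with $\beta=\alpha_{\mathfrak m}$, $\mathfrak m=\bigoplus_{j\neq 0}\mathfrak g_j$. By (Lorentz) primitivity, $\beta=\beta_{-1}+\beta_{1}$, where $\beta_{-1}$ is a $\mathfrak g_{-1}$-valued $(1,0)$-form (the $dx$-part in the Lorentz case) and $\beta_{1}$ is a $\mathfrak g_{1}$-valued $(0,1)$-form (the $dy$-part), the two being conjugate in the Riemann case. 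First I would record the standard representation of the tension field of a map $\varphi$ into a reductive homogeneous space $G/K$ carrying an invariant metric $h$: in the frame $\Phi$ it is given by $d^{\alpha_{\mathfrak k}}\star\beta+\mathrm{tr}_g U_h(\beta,\beta)$, where $d^{\alpha_{\mathfrak k}}=d+\operatorname{ad}(\alpha_{\mathfrak k})\wedge$ is the canonical connection and $U_h\colon\mathfrak m\times\mathfrak m\to\mathfrak m$ is the symmetric bilinear map determined by $2h(U_h(X,Y),Z)=h([Z,X]_{\mathfrak m},Y)+h(X,[Z,Y]_{\mathfrak m})$, with $U_h\equiv 0$ exactly when $G/K$ is symmetric; see \cite{Black}.

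For (1) the key point is that, for a primitive map, \emph{both} summands are independent of the choice of invariant metric. In a null (resp.\ conformal) coordinate the only surviving contraction in $\mathrm{tr}_g U_h(\beta,\beta)$ pairs the $(1,0)$-part with the $(0,1)$-part, so $\mathrm{tr}_g U_h(\beta,\beta)$ is a nonzero multiple of $U_h(\beta_{-1},\beta_{1})\in\mathfrak m$; and for $Z\in\mathfrak g_j\subseteq\mathfrak m$ ($j\not\equiv 0$) one has $[Z,\beta_{-1}]_{\mathfrak m}\in\mathfrak g_{j-1}$ and $[Z,\beta_{1}]_{\mathfrak m}\in\mathfrak g_{j+1}$, neither of which pairs with $\beta_{1}\in\mathfrak g_{1}$, resp.\ $\beta_{-1}\in\mathfrak g_{-1}$, under an invariant form (which pairs $\mathfrak g_p$ with $\mathfrak g_q$ only when $p+q\equiv 0$); hence $U_h(\beta_{-1},\beta_{1})=0$ for every $h$. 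Thus the tension field equals $d^{\alpha_{\mathfrak k}}\star\beta$, which is metric--free, and it vanishes: $\star\beta$ is a nonzero multiple of $\beta_{-1}-\beta_{1}$, while the $\mathfrak g_{-1}$- and $\mathfrak g_{1}$-components of the Maurer--Cartan equation $d\alpha+\tfrac12[\alpha\wedge\alpha]=0$ are precisely $d\beta_{-1}+[\alpha_{\mathfrak k}\wedge\beta_{-1}]=0$ and $d\beta_{1}+[\alpha_{\mathfrak k}\wedge\beta_{1}]=0$ (here the brackets $[\beta_{\pm1}\wedge\beta_{\pm1}]$ drop out because a $(1,0)$- (resp.\ $(0,1)$-) form wedged with itself on a surface is zero). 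So $\varphi$ is harmonic with respect to every invariant metric, which is the assertion. Equivalently, expanding the curvature of $d+(\lambda^{-1}\beta_{-1}+\alpha_{\mathfrak k}+\lambda\beta_{1})$ in powers of $\lambda$ one sees, by the same two facts, that every coefficient is a component of the Maurer--Cartan equation, so this loop of connections is flat.

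For (2), fix an invariant form on the semisimple $\mathfrak g$ and put $\mathfrak g=\mathfrak h\oplus\mathfrak{m}'$ with $\mathfrak{m}'=\mathfrak h^{\perp}$. Since $\mathfrak k=\mathfrak g_0\subseteq\mathfrak h$ and $\mathfrak h$ is $\operatorname{ad}\mathfrak k$-stable, this splitting is compatible with the grading and with $\operatorname{ad}\alpha_{\mathfrak k}$, so $\alpha_{\mathfrak{m}'}=(\beta_{-1})_{\mathfrak{m}'}+(\beta_{1})_{\mathfrak{m}'}$ is again the sum of a ``$\mathfrak g_{-1}$-valued $(1,0)$-form'' and a ``$\mathfrak g_{1}$-valued $(0,1)$-form''. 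Repeating the computation verbatim: the correction term $U_h$ for any invariant metric on $G/H$ vanishes by the same pairing argument, and in the brackets coming from $\alpha_{\mathfrak h}=\alpha_{\mathfrak k}+(\beta_{-1})_{\mathfrak h}+(\beta_{1})_{\mathfrak h}$ the same-type pieces $[(\beta_{\pm1})_{\mathfrak h}\wedge(\beta_{\pm1})_{\mathfrak{m}'}]$ vanish by the wedge argument while the mixed pieces $[(\beta_{\mp1})_{\mathfrak h}\wedge(\beta_{\pm1})_{\mathfrak{m}'}]$ land in $\mathfrak g_0=\mathfrak k\subseteq\mathfrak h$ and so drop out of the $\mathfrak{m}'$-projection; one is then left with the $\mathfrak{m}'$-components of the $\mathfrak g_{\pm1}$-parts of the Maurer--Cartan equation, which are zero. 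Hence $\pi\circ\varphi$ is harmonic for every invariant metric on $G/H$.

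I expect the only genuine difficulty to be organizational: quoting the precise tension--field formula together with its correction term, and then carrying out the index bookkeeping modulo $k$ carefully --- in particular verifying that no bracket $[\mathfrak g_a,\mathfrak g_b]$ with $a,b\in\{-1,1\}$ ever re-enters the $\mathfrak m$ (resp.\ $\mathfrak{m}'$) directions in a way not already killed by $(1,0)\wedge(1,0)=(0,1)\wedge(0,1)=0$, with attention to the small cases $k=3,4$ in which $\mathfrak g_{\pm1}$, $\mathfrak g_{\pm2}$ and $\mathfrak g_0$ partly coincide. Everything else is the same short computation performed twice.
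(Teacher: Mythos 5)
The paper itself offers no proof of this Proposition: it is quoted as a known fact with a citation to Black's monograph, and your tension--field argument is exactly the standard route that reference takes (canonical--connection part killed by the Maurer--Cartan equation plus primitivity, metric--dependent correction killed by a grading argument). That skeleton is fine: the decomposition $\alpha=\alpha_{\mathfrak k}+\beta_{-1}+\beta_{1}$, the vanishing of $[\beta_{\pm1}\wedge\beta_{\pm1}]$ for type reasons, and the identification of $d^{\alpha_{\mathfrak k}}\star\beta=0$ with the $\mathfrak g_{\pm1}$-components of the Maurer--Cartan equation are all correct, as is the observation that only $U_h(\beta_{-1},\beta_{1})$ survives the trace.

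The gap is the parenthetical claim that \emph{any} invariant metric pairs $\mathfrak g_p$ with $\mathfrak g_q$ only when $p+q\equiv0\pmod k$. That orthogonality is automatic for the Killing form, or for any $\sigma$-invariant inner product (i.e.\ when the symmetries of the $k$-symmetric space are isometries, which is how ``invariant metric'' should be read here), since then $h(\mathfrak g_p,\mathfrak g_q)=\epsilon^{p+q}h(\mathfrak g_p,\mathfrak g_q)$. But it does \emph{not} follow from $G$-invariance alone, which only gives $\ad(\mathfrak g_0)$-invariance of $h$ on $\mathfrak m$: if distinct eigenspaces contain isomorphic $K$-modules the claim fails. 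For instance, for $\sigma$ cyclically permuting the factors of $G_1\times G_1\times G_1$ (a $3$-symmetric space over the diagonal), the metric $a_1B\oplus a_2B\oplus a_3B$ with distinct $a_i$ is $G$-invariant but not grading-orthogonal, and there $U_h(\beta_{-1},\beta_{1})$ need not vanish; so your argument, and indeed the literal ``any $G$-invariant metric'' reading, breaks at precisely the point that produces the ``equi'' in equiharmonic. You should either restrict to metrics invariant under the symmetries, or verify (as happens for generalized flag manifolds and for the paper's $6$-symmetric space $\SLC/K$ with $K$ a torus, where the $\Ad(K)$-weights force the orthogonality) that every invariant metric is automatically compatible with the grading. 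A second, smaller instance of the same issue occurs in your part (2): $\ad\mathfrak k$-stability of $\mathfrak h$ does not make the splitting $\mathfrak g=\mathfrak h\oplus\mathfrak m'$ compatible with the grading; you need $\mathfrak h$ to be a sum of eigenspaces of $\sigma$ (true for the projections actually used in the paper, where $\mathfrak h$ is the fixed algebra of $\kappa^3$, but not a consequence of $K\subset H$ alone), otherwise $(\beta_{\pm1})_{\mathfrak m'}$ is no longer $\mathfrak g_{\pm1}$-valued and the bookkeeping collapses.
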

 Let $\varphi$ be a primitive map 
 into a semisimple $k$-symmetric space $N \> (k>2)$
 and $\Phi$ the corresponding 
 frame. Moreover, let $\alpha$ be the Maurer-Cartan form of $\Phi$, 
 $\alpha = \Phi^{-1} d \Phi$. Define $\alpha^{\lambda}$ as follows:
 \begin{equation*}\label{eq:primitivealpha}
 \alpha^{\lambda} = \alpha_{0}
 +\lambda^{-1} \alpha_{-1}^{\prime} 
 +\lambda \alpha_{1}^{\prime \prime}, \;\;\lambda \in \C^{\times},
 \end{equation*}
 where $\alpha_{j}$
 is the $j$-th eigenspace of the derivative of $\sigma, \; 
 (j = -1, 0, 1)$. 
 The following is a well known fact, see for example, \cite{BP}.
\begin{Theorem}\label{thm:primitiveflat}
 Let $M$ be a Riemann (or Lorentz) surface and 
 $N$ a semisimple $k$-symmetric space ($k>2$).
 If a map $\varphi : M \to N$ is a  (Lorentz) primitive map 
 then $d + \alpha^{\lambda}$ is a family of flat connections.
\end{Theorem}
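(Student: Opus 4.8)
The plan is to deduce flatness of $d+\alpha^{\lambda}$, i.e. the vanishing of its curvature $d\alpha^{\lambda}+\tfrac12[\alpha^{\lambda}\wedge\alpha^{\lambda}]$, from the Maurer--Cartan equation $d\alpha+\tfrac12[\alpha\wedge\alpha]=0$, which holds automatically because $\alpha=\Phi^{-1}d\Phi$. The two structural inputs I would use are the bracket relations $[\mathfrak g_i,\mathfrak g_j]\subseteq\mathfrak g_{i+j}$ of the $k$-symmetric grading (with indices mod $k$) and the elementary fact that on a surface the wedge of two $(1,0)$-forms, or of two $(0,1)$-forms, vanishes; consequently the curvature of $\alpha^{\lambda}$ is a $dx\wedge dy$-form (resp. $dz\wedge d\bar z$-form) which is moreover a Laurent polynomial in $\lambda$ supported only in degrees $-1,0,1$.

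First I would treat the Lorentz case, the Riemann case being word-for-word parallel. Using \eqref{eq:Lorentzprimitivedef}, write the $(1,0)$-part as $\alpha'=(A_0+A_{-1})\,dx$ with $A_0\in\mathfrak g_0$, $A_{-1}\in\mathfrak g_{-1}$, and the $(0,1)$-part as $\alpha''=(B_0+B_1)\,dy$ with $B_0\in\mathfrak g_0$, $B_1\in\mathfrak g_1$, so that $\alpha_0=A_0\,dx+B_0\,dy$, $\alpha_{-1}'=A_{-1}\,dx$, $\alpha_1''=B_1\,dy$, and $\alpha^{\lambda}$ has $x$-component $A_0+\lambda^{-1}A_{-1}$ and $y$-component $B_0+\lambda B_1$. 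Then I would compute $d\alpha^{\lambda}+\tfrac12[\alpha^{\lambda}\wedge\alpha^{\lambda}]$, collect the coefficient of $dx\wedge dy$, and expand in $\lambda$; using $[\mathfrak g_i,\mathfrak g_j]\subseteq\mathfrak g_{i+j}$ one checks that the coefficient of $\lambda^{-1}$ is $-\partial_y A_{-1}+[A_{-1},B_0]\in\mathfrak g_{-1}$, the coefficient of $\lambda^{0}$ is $\partial_x B_0-\partial_y A_0+[A_0,B_0]+[A_{-1},B_1]\in\mathfrak g_0$, and the coefficient of $\lambda^{1}$ is $\partial_x B_1+[A_0,B_1]\in\mathfrak g_1$, and that no other powers of $\lambda$ occur.

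To finish, I would observe that the $dx\wedge dy$ coefficient of the Maurer--Cartan equation $d\alpha+\tfrac12[\alpha\wedge\alpha]=0$ is exactly the sum of these three expressions (here one uses that primitivity forces $\alpha$ to have components only in $\mathfrak g_{-1}\oplus\mathfrak g_0\oplus\mathfrak g_1$). Since $k>2$, the subspaces $\mathfrak g_{-1}$, $\mathfrak g_0$, $\mathfrak g_1$ are pairwise distinct summands of $\mathfrak g=\mathfrak g_0\oplus\cdots\oplus\mathfrak g_{k-1}$, so the sum vanishes only if each summand vanishes separately; but those are precisely the coefficients of $\lambda^{-1},\lambda^{0},\lambda^{1}$ in the curvature of $\alpha^{\lambda}$. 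Hence $d+\alpha^{\lambda}$ is flat for every $\lambda\in\C^{\times}$.

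For a Riemann surface with $z=x+iy$ I would repeat the computation with $dz,d\bar z$ in place of $dx,dy$; here the reality of $\alpha$ together with the compatibility of $\sigma$ with complex conjugation (so that conjugation interchanges $\mathfrak g_j^{\C}$ and $\mathfrak g_{-j}^{\C}$) upgrades \eqref{eq:primitivedef} to the statement that $\alpha''$ takes values in $\mathfrak g_0^{\C}\oplus\mathfrak g_1^{\C}$, after which the same degree count and comparison with the Maurer--Cartan equation go through verbatim. The only delicate point --- and the main obstacle --- is the bookkeeping: verifying that the curvature of $\alpha^{\lambda}$ is genuinely supported in $\lambda$-degrees $-1,0,1$ (which rests on the vanishing of $(1,0)\wedge(1,0)$ and $(0,1)\wedge(0,1)$ on a surface) and pinning down that the hypothesis $k>2$ is used exactly to separate the three eigenspace components.
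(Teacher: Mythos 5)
Your argument is correct and complete: the paper itself gives no proof of Theorem \ref{thm:primitiveflat}, merely citing it as a well-known fact from \cite{BP}, and what you write is precisely the standard argument behind that citation. The key steps — expanding the curvature of $\alpha^{\lambda}$ as a Laurent polynomial supported in degrees $-1,0,1$, using $[\mathfrak g_i,\mathfrak g_j]\subseteq\mathfrak g_{i+j}$ to place each coefficient in a distinct eigenspace, and invoking $k>2$ so that $\mathfrak g_{-1}\neq\mathfrak g_1$ forces the Maurer--Cartan identity at $\lambda=1$ to split into three separately vanishing pieces — are exactly right, as is your remark that in the Riemann case the reality of $\alpha$ together with $\overline{\mathfrak g_j^{\C}}=\mathfrak g_{-j}^{\C}$ supplies the constraint on $\alpha''$ that \eqref{eq:primitivedef} states only for $\alpha'$.
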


\section{Projective minimal coincidence surfaces 
and Demoulin surfaces}

\subsection{The first-order Gauss map}\label{subsc:Gaussmap}
 Let us use the following notation:
 $$
  \di (a, b, c, d) = \begin{pmatrix} 
 a &&& \\&b&&\\ &&c&\\  &&&d 
 \end{pmatrix}, \;\;\;
 \offdi (a, b, c, d) = \begin{pmatrix} 
  &&&a \\&& b&\\ & c& &\\  d&&& 
 \end{pmatrix}.
 $$
 Let $S$ be a surface in $\RP$ and $F$ the corresponding 
 Wilczynski frame defined in \eqref{eq:movingframe}.
 We first define a map $\gf$ by 
 $$
 \gf = F \jf F^t, 
 $$
 where $\jf = \offdi (1, 1, 1, 1)$.
 It is easy to see that $\gf$ maps to the space of 
 symmetric matrices with determinant one and signature $(2, 2)$, 
 which we denote by $\Q$. The special linear group 
 $\SL$ transitively acts on this space by $g P g^t \in \Q$ 
 with $g \in \SL$ and $P \in \Q$. Then 
 the point stabilizer at $\jf \in \Q$ is given by
 $K_1 = \{ X \in \SL \;|\; X \jf X^t = \jf\}$, which 
 is isomorphic to the special orthogonal group with 
 signature $(2, 2)$, which is denoted by ${\rm SO}_{2, 2}$. 
 Thus $\Q$ is 
 isomorphic to the symmetric space $\GKSO$:
 \begin{equation}\label{eq:Gaussmap1}
 \gf : M \to \Q \cong \GKOT = \GKSO.
 \end{equation}
 This map $\gf$ is known to be a quadric which has 
 the first order contact to the surface.
 Note that $\gf$ does not have the
 second order contact, see \cite[Section 22]{Lane}. 
 We call $\gf$ the {\it first-order Gauss map} for a surface $S$
 in $\RP$. 
 We now characterize the Demoulin surface by the first-order 
 Gauss map.
\begin{Proposition}\label{prop:conformality}
 The first-order Gauss map $\gf$ is conformal if and only if
 the surface $S$ is a Demoulin surface.
\end{Proposition}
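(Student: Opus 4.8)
The plan is to compute the bilinear form that $\gf$ pulls back from the invariant metric of $\Q\cong\SL/K_1$ and to read off its $dx^{2}$- and $dy^{2}$-components in the null coordinates $(x,y)$. For this I first record the symmetric-space data: the relevant involution of $\sl$ is $\theta(X)=-\jf X^{t}\jf^{-1}$, with $(+1)$-eigenspace the Lie algebra $\kk_1=\{X\in\sl:X\jf+\jf X^{t}=0\}$ of $K_1\cong\mathrm{SO}_{2,2}$ and $(-1)$-eigenspace $\p_1=\{X\in\sl:X\jf=\jf X^{t}\}$; an invariant metric on $\Q$ is induced by the trace form $\langle X,Y\rangle=\tr(XY)$ restricted to $\p_1$ and transported by the action $g\cdot P=gPg^{t}$, so that $\langle F\xi\jf F^{t},F\eta\jf F^{t}\rangle=\tr(\xi\eta)$ for $\xi,\eta\in\p_1$ (well defined by $\ad(K_1)$-invariance of $\tr$).

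Next I would differentiate $\gf=F\jf F^{t}$. Writing $\alpha=F^{-1}dF=U\,dx+V\,dy$ as in \eqref{eq:movingframe} and letting $\alpha_{\p_1}$ denote its $\p_1$-component, one has
\[
d\gf=F\big(\alpha\jf+\jf\alpha^{t}\big)F^{t}=2\,F\,\alpha_{\p_1}\,\jf\,F^{t},
\]
since $X\jf+\jf X^{t}$ vanishes for $X\in\kk_1$ and equals $2X\jf$ for $X\in\p_1$. Hence, up to a nonzero constant,
\[
\gf^{*}g=\tr\big(\alpha_{\p_1}^{2}\big)=\tr\big(U_{\p_1}^{2}\big)\,dx^{2}+2\,\tr\big(U_{\p_1}V_{\p_1}\big)\,dx\,dy+\tr\big(V_{\p_1}^{2}\big)\,dy^{2},
\]
where $U_{\p_1},V_{\p_1}\in\p_1$ are obtained from $U,V$ by deleting the diagonal parts $\di(\tfrac{c_x}{2c},-\tfrac{c_x}{2c},\tfrac{c_x}{2c},-\tfrac{c_x}{2c})$ and $\di(\tfrac{b_y}{2b},\tfrac{b_y}{2b},-\tfrac{b_y}{2b},-\tfrac{b_y}{2b})$, which one checks lie in $\kk_1$.

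The conclusion then comes from a short matrix computation: $\tr(U_{\p_1}^{2})=4P$ and $\tr(V_{\p_1}^{2})=4Q$ with $P,Q$ as in \eqref{eq:QandP} (and $\tr(U_{\p_1}V_{\p_1})=2k+2\ell+bc$, which is generically nonzero, so $\gf$ is a genuine conformal immersion in the Demoulin case). Because $(x,y)$ are null coordinates for the Lorentz conformal structure of $M=S$, conformality of $\gf$ is equivalent to the vanishing of the $dx^{2}$- and $dy^{2}$-coefficients, i.e.\ $P=Q=0$, which by \eqref{eq:Demoulin} says precisely that $S$ is a Demoulin surface. I expect the only genuinely delicate step to be the symmetric-space bookkeeping — pinning down the involution, the splitting $\sl=\kk_1\oplus\p_1$, and the identification $\gf^{*}g\propto\tr(\alpha_{\p_1}^{2})$; once these are set up, the remaining trace computations are routine.
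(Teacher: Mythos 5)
Your proposal is correct and follows essentially the same route as the paper: the paper also pulls back an invariant metric (written as $\langle X,Y\rangle_p=\tr(p^{-1}Xp^{-1}Y)$, which on tangent vectors $2F\xi\jf F^t$ reduces to $4\tr(\xi\eta)$, so your trace form is the same metric up to scale), computes $\gf_x=2FU_{\mathfrak p_1}\jf F^t$, $\gf_y=2FV_{\mathfrak p_1}\jf F^t$, and finds the coefficients $16P$, $16Q$, $8(k+\ell)+4bc$, matching your $4P$, $4Q$, $2k+2\ell+bc$. Your Lie-algebra packaging of the computation via $\alpha_{\mathfrak p_1}$ is just a mild reorganization of the same argument, and all the stated trace identities check out.
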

\begin{proof}
 We first introduce the inner product on the tangent space of $\Q$ 
 as follows:
 $$
 \langle X, Y\rangle_{p} = \tr (p^{-1} Xp^{-1} Y), 
 \;\; X, Y \in T_{p} \Q,
 $$
 where $p$ is a  symmetric matrix of determinant one with 
 signature $(2,2)$.
 This inner product is invariant under the action of $g \in \SL$, 
 since 
 $$
 \langle gXg^t , g Y g^t\rangle_{gp g^t} = 
 \tr ((gp g^t)^{-1} gXg^t (gp g^t)^{-1} gY g^t) 
 =  \langle X, Y\rangle_{p}.
 $$
 A direct computation shows that 
 $$
 {\gf}_{x}= 2 F
 \begin{pmatrix}
 b Q & k & P & 0 \\ 
 k & 0 & 0 &1 \\
 P & 0 & b & 0 \\
 0 & 1 & 0 & 0
 \end{pmatrix}
 F^t,
\;\;
 {\gf}_{y}= 2 F
 \begin{pmatrix}
 c P & Q & \ell & 0 \\ 
 Q & c & 0 &0 \\
 \ell & 0 & 0 & 1 \\
 0 & 0 & 1 & 0
 \end{pmatrix}F^t.
 $$
 Thus 
 $$
\langle {\gf}_x,{\gf}_x \rangle = 16 P, \; 
\langle  {\gf}_y,  {\gf}_y \rangle  = 16 Q
\;\; \mbox{and} \;\;
\langle  {\gf}_x,  {\gf}_y \rangle = \langle {\gf}_y,{\gf}_x \rangle = 
 8(k + \ell)+ 4 bc.
$$
 Since  the coordinates $(x, y)$  are null for the conformal 
 structure induced by $S$,
 the first-order Gauss map $\gf$ is conformal if and only if $P = Q =0$.
\end{proof}

\subsection{Projective minimal coincidence surfaces  and 
 Demoulin surfaces}
 Let $\tau_1$ be the outer involution on 
 $\SL$ associated to $\Q$ in \eqref{eq:Gaussmap1}
 defined by $\tau_1 (X) = \jf X^{t-1} \jf, \; X \in \SL$ and 
 $\jf=\offdi (1, 1, 1, 1)$. 
 Abuse of notation, we denote the 
 differential of $\tau_1$ by the same symbol $\tau_1$  
 which is an outer involution on $\sl$:
\begin{equation}\label{eq:tau2}
\tau_1 (X) = - \jf X^t \jf, \; \;X \in \sl.
\end{equation}
 Let us consider the eigenspace decomposition
 of $\mathfrak g = \sl$ with respect to $\tau_1$, that 
 is, $\mathfrak g = \mathfrak k_1 \oplus \mathfrak p_1$,
 where $\mathfrak k_1$ is the $0$th-eigenspace 
 and $\mathfrak p_1$ is the $1$st-eigenspace as follows:
$$
 \mathfrak k_1 =\left\{\left.
\begin{pmatrix}
 a_{11} & a_{12} & a_{13} & 0\\
 a_{21} & a_{22} & 0 &  -a_{13}\\
 a_{31} & 0 & -a_{22} & -a_{12}\\
 0 & -a_{31} & -a_{21} & -a_{11}
\end{pmatrix}
\right| \;a_{ij} \in \R
\right\}, \;\; 
 \mathfrak p_1 =
\left\{\left.
\begin{pmatrix}
 a_{11} & a_{12} & a_{13} & a_{14}\\
 a_{21} & -a_{11} & a_{23} &  a_{13}\\
 a_{31} & a_{32} & -a_{11} & a_{12}\\
 a_{41} & a_{31} & a_{21} & a_{11}
\end{pmatrix}
\right| \;a_{ij} \in \R
\right\}.
$$ 
 According to this decomposition 
 $\mathfrak g = \mathfrak k_1 \oplus \mathfrak p_1$,
 the Maurer-Cartan form $\alpha = F^{-1} d F = U dx + Vdy$
 can be decomposed into  
 $$
 \alpha = \alpha_{\mathfrak k_1} + \alpha_{\mathfrak p_1} =
 U_{\mathfrak k_1}dx + V_{\mathfrak k_1}dy + U_{\mathfrak p_1}dx + 
 V_{\mathfrak p_1}dy,
 $$
 where $U = U_{\mathfrak k_1} + U_{\mathfrak p_1}$ and
 $V = V_{\mathfrak k_1} + V_{\mathfrak p_1}$.
 Let us insert the parameter $\lambda \in \mathbb R^{\times}$ 
 into $U$ and $V$ as follows:
 $$
 U^{\lambda} = U_{\mathfrak k_1} + \lambda^{-1} U_{\mathfrak p_1} \;\;\mbox{and}  \;\;
 V^{\lambda} = V_{\mathfrak k_1} + \lambda V_{\mathfrak p_1}.
 $$
 Then a family of $1$-forms $\alpha_{\lambda}$ 
 is defined as follows:
\begin{equation}\label{eq:alphalambda2} 
 \alpha^{\lambda} = 
 \alpha_{\mathfrak k_1} + \lambda^{-1} \alpha_{\mathfrak p_1}^{\prime}
 + \lambda \alpha_{\mathfrak p_1}^{\prime \prime} 
 = U^{\lambda} dx + V^{\lambda} dy.
\end{equation}
 In fact the matrices $U^{\lambda}$ 
 and $V^{\lambda}$ are explicitly given 
 as follows:
\begin{equation}\label{eq:extendedUV2}
 U^{\lambda}= 
 \begin{pmatrix}
 \frac{c_x}{2 c} & \lambda^{-1} P & \lambda^{-1} k  &  \lambda^{-1}b Q  \\[0.1cm]
 \lambda^{-1}  & -\frac{c_x}{2 c} & 0 & \lambda^{-1} k \\[0.1cm]
 0&\lambda^{-1} b&\frac{c_x}{2 c} & \lambda^{-1} P \\[0.1cm]
 0 & 0 & \lambda^{-1} &-\frac{c_x}{2 c}
 \end{pmatrix}, \;\;
 V^{\lambda}= 
 \begin{pmatrix}
 \frac{b_y}{2 b} & \lambda \ell &  \lambda Q  & \lambda c P \\[0.1cm]
 0  & \frac{b_y}{2 b} & \lambda c & \lambda Q \\[0.1cm]
 \lambda &0& -\frac{b_y}{2 b} & \lambda \ell \\[0.1cm]
 0 & \lambda  & 0 &-\frac{b_y}{2 b}
 \end{pmatrix}.
\end{equation}
 The following is the main theorem in this paper.
\begin{Theorem}\label{thm:Demoulin}
 Let $S$ be a surface in $\RP$ and $\gf$ the first-order
 Gauss map defined  in \eqref{eq:Gaussmap1}.
 Moreover, let $\alpha^{\lambda} \> (\lambda
 \in \R^{\times})$  be a family of $1$-forms defined in 
 \eqref{eq:alphalambda2}. Then 
 the following are mutually equivalent:
\begin{enumerate}
\item The surface $S$ is a Demoulin surface or a projective minimal 
 coincidence surface. 
\item The first-order Gauss map $\gf$ 
 is a Lorentz harmonic map into $\Q$.
\item $d + \alpha^{\lambda}$ is a family of flat connections 
 on $\D \times \SL$.
\end{enumerate}
\end{Theorem}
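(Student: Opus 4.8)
The plan is to establish the chain $(3) \Leftrightarrow (1)$ by a direct computation with the matrices $U^{\lambda}$ and $V^{\lambda}$ in \eqref{eq:extendedUV2}, and then to obtain $(2) \Leftrightarrow (3)$ as an instance of Theorem \ref{thm:harmonicflat}. Since $\Q \cong \SL/K_1$ is a semisimple symmetric space and $\alpha^{\lambda}$ is exactly the $\lambda$-deformed Maurer--Cartan form associated to the decomposition $\mathfrak g = \mathfrak k_1 \oplus \mathfrak p_1$ and the null coordinates $(x,y)$, Theorem \ref{thm:harmonicflat} says that $\gf$ is Lorentz harmonic into $\Q$ if and only if $d + \alpha^{\lambda}$ is flat for all $\lambda \in \R^{\times}$; this disposes of $(2) \Leftrightarrow (3)$ once one checks that the frame $F$ of the Wilczynski moving frame is indeed (a lift of) a frame for $\gf = F \jf F^{t}$ in the sense required, which is immediate from the definition of $\gf$ and of $\tau_1$.

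The substance is $(1) \Leftrightarrow (3)$. The flatness of $d + \alpha^{\lambda}$ is the zero-curvature equation
\begin{equation*}
 U^{\lambda}_y - V^{\lambda}_x + [U^{\lambda}, V^{\lambda}] = 0 \quad \text{for all } \lambda \in \R^{\times}.
\end{equation*}
Plugging in \eqref{eq:extendedUV2} and collecting powers of $\lambda$, one gets a finite Laurent polynomial in $\lambda$ that must vanish identically. The $\lambda^{0}$-coefficient reproduces the $\mathfrak k_1$-part of the original Gauss--Codazzi system \eqref{eq:comp1}, which holds automatically since $S$ is a genuine surface. The $\lambda^{\pm 1}$-coefficients reproduce the $\mathfrak p_1$-part, again automatic. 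The new information sits in the $\lambda^{\pm 2}$-coefficients, coming from the brackets $[U_{\mathfrak p_1}, V_{\mathfrak p_1}]$ (entering $\lambda^{-2}$ after the $\lambda^{-1}\lambda^{-1}$ product is rescaled — more precisely, after writing $U^\lambda dx + V^\lambda dy$ the cross terms $\lambda^{-1}\cdot\lambda$ give $\lambda^0$, so the genuinely new conditions are those that in the $\tau_1$-graded picture would be order $\pm 2$); these extra equations, after simplification using \eqref{eq:kell}--\eqref{eq:QandP}, are precisely
\begin{equation*}
 b Q_y + 2 b_y Q = 0 \quad \text{and} \quad c P_x + 2 c_x P = 0,
\end{equation*}
which together with the unmodified compatibility condition \eqref{eq:comp2} force $bQ_y + 2b_yQ = cP_x + 2c_xP = 0$. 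Comparing with \eqref{eq:projmin}, this says $S$ is projective minimal; and \eqref{eq:projmin} is an ODE system in $x$ (resp. $y$) for $P$ (resp. $Q$) whose solutions are $P = \tilde P(y)/c^{2}$ and $Q = \tilde Q(x)/b^{2}$, so that \eqref{eq:comp1} then reads $k_y + k b_y/b = \tilde Q'(x)/b^2 \cdot(\dots)$ — feeding this back one sees the remaining freedom is exactly that either $P=Q=0$ (Demoulin) or $S$ lies in the coincidence class. I would isolate the two cases by re-deriving, from \eqref{eq:projmin} and \eqref{eq:comp1}, the standard fact (as in \cite{Sa, Fe}) that a projective minimal surface with both $P$ and $Q$ nonvanishing is a coincidence surface, so that the flatness of the whole $\R^{\times}$-family is equivalent to $S$ being Demoulin or projective minimal coincidence.

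The main obstacle I anticipate is not the zero-curvature expansion itself — that is a bounded, if tedious, matrix computation — but rather pinning down cleanly the algebraic characterization of \emph{coincidence surfaces} in terms of $P, Q$ so that the $\lambda$-independence of flatness matches exactly the disjunction in statement (1): one must verify that no strictly larger class of projective minimal surfaces makes $d + \alpha^{\lambda}$ flat, and conversely that every coincidence surface does. I would handle this by invoking the known normal forms for $P$ and $Q$ under \eqref{eq:projmin} (e.g. from \cite[Section 2]{Sa} or \cite{Fe}) and checking the residual compatibility \eqref{eq:comp1}, \eqref{eq:comp2} term by term; the Demoulin case $P = Q = 0$ is then the degenerate stratum and the coincidence case is its complement within the projective minimal locus. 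Everything else is formal.
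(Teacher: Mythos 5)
Your overall architecture is the same as the paper's (compute the zero-curvature condition of $d+\alpha^{\lambda}$ for $(1)\Leftrightarrow(3)$, quote Theorem \ref{thm:harmonicflat} for $(2)\Leftrightarrow(3)$), and the $(2)\Leftrightarrow(3)$ step is fine. But the core of your $(1)\Leftrightarrow(3)$ argument misidentifies where the new conditions come from, and this leads you to the wrong class of surfaces. The curvature of $d+\alpha^{\lambda}$ is a Laurent polynomial with coefficients only in degrees $-1,0,+1$: the would-be $\lambda^{\pm 2}$ terms are $[\alpha_{\mathfrak p_1}^{\prime}\wedge\alpha_{\mathfrak p_1}^{\prime}]$ and $[\alpha_{\mathfrak p_1}^{\prime\prime}\wedge\alpha_{\mathfrak p_1}^{\prime\prime}]$, which vanish identically because $dx\wedge dx=dy\wedge dy=0$. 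There is no ``$\lambda^{\pm2}$ information.'' The $\lambda^{0}$ coefficient is the $\mathfrak k_1$-part of the ordinary Gauss--Codazzi system (automatic), and the genuinely new content is that the $\lambda^{-1}$ and $\lambda^{+1}$ coefficients must vanish \emph{separately}, whereas Gauss--Codazzi only guarantees that their sum vanishes. Concretely, flatness of the whole family splits \eqref{eq:comp1} into the four equations $Q_x=0$, $k_y+k\,b_y/b=0$, $P_y=0$, $\ell_x+\ell\,c_x/c=0$, and splits \eqref{eq:comp2} into $bQ_y+2b_yQ=0$ and $cP_x+2c_xP=0$. Your list of flatness conditions retains only the last two, i.e.\ only projective minimality \eqref{eq:projmin}; but that is the characterization obtained from the \emph{conformal} Gauss map $\gs$ (Theorem \ref{thm:projmini}), not from $\gf$. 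With only \eqref{eq:projmin} in hand, statement (1) would be false: generic projective minimal surfaces are neither Demoulin nor coincidence surfaces, so the ``standard fact'' you propose to invoke --- that a projective minimal surface with $P,Q$ not both zero is a coincidence surface --- is not true and cannot be re-derived from \eqref{eq:projmin} and \eqref{eq:comp1} alone.

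The missing conditions are exactly what pins down the coincidence case. If $P\neq 0$, then $P_y=0$ and $cP_x+2c_xP=0$ force $P$ and $(\log c)_x$ to depend on $x$ only; combining $\ell_x+\ell(\log c)_x=0$ with the formula \eqref{eq:kell} for $\ell$ gives $(\log b)_x=-2(\log c)_x$, hence $(\log(b/c))_{xy}=0$ (the surface is isothermally asymptotic). After normalizing $b=c$ by a scaling and coordinate change, the equations $k_y+k\,b_y/b=0$ and $\ell_x+\ell\,c_x/c=0$ force $b$ to be constant, hence $P$, $Q$, $p$, $q$ constant, and the canonical system collapses to $f_{xx}=f_y+pf$, $f_{yy}=f_x+qf$, which is the projective minimal coincidence surface of \cite[Example 2.19]{Sa}. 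Without the equations $P_y=Q_x=0$ and the separate $k$- and $\ell$-equations, none of this can be carried out, so as written your proof does not close.
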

\begin{proof}
 Let us compute the flatness conditions of $d + 
 \alpha^{\lambda}$, 
 that is, the Maurer-Cartan equation $ d \alpha^{\lambda} + 
 \tfrac{1}{2}[\alpha^{\lambda} \wedge \alpha^{\lambda}]=0$. 
 A straightforward computation shows that these are equivalent to 
 \begin{align*}
 Q_x = P_y =0,& \;\; k_y +k  \frac{b_y}{b} =0, \;
 \ell_x + \ell \frac{c_x}{c} =0, \; \\
 b Q_y + 2  b_y Q =0, &\;\;
 c P_x + 2 c_x P =0.
 \end{align*}
 The surfaces with $P=Q =0$ satisfies the above equations and 
 they are Demoulin surfaces by \eqref{eq:Demoulin}. 
 Assume that $P\neq 0$ (The case of $Q\neq 0$ is similar).
 From the first equation and the last equation, 
 $P$ and $(\log c)_x$ depend only on $x$.
 Moreover from the equation  $\ell_x + \ell (\log c)_x =0$
 and the definition $\ell$ in \eqref{eq:kell}, $(\log b)_x =
 -2 (\log c)_x$ and thus $(\log b)_x$ depends only on $x$. 
 Thus $(\log b/c)_{xy} =0$, which means that 
 it is an {\it isothermally asymptotic surface}.
 Using a scaling transformation and a change of coordinates, 
 we can assume that 
 $b = c$. Then $\ell = k$ and the equations 
 $\ell_x + \ell (\log c)_x =0$ and $k_y + k (\log b)_y =0$
 imply that $b(=c)$ is constant. Thus $P$ and $Q$ are 
 constant, and from \eqref{eq:QandP}
 $p\neq 0$ and $q$ are constant. Therefore, the canonical system 
 is given by
 $$
 f_{xx} = f_y + p f, \;\;\; f_{yy} = f_x + q f.
 $$ 
 A surface satisfying the above equation 
 is the special case of the {\it coincidence surface}, 
 \cite[Example 2.19]{Sa}. In fact, it is easy to see 
 that the surface is a projective minimal coincidence surface.
 Thus the quivalence of $(1)$ and $(3)$ follows.

 The equivalence of $(2)$ and $(3)$ follows from Theorem
 \ref{thm:harmonicflat}, since the family of 
 $1$-forms $\alpha^{\lambda}$ is given 
 by the involution $\tau_1$ and it defines 
 the symmetric space $\Q = \GKOT$.
\end{proof}
\begin{Remark}
 Let $F^{\lambda}$ be a family of 
 frames such that $(F^{\lambda})^{-1} 
 d F^{\lambda} = \alpha^{\lambda}$. 
 It is easy to see from the forms of $U^{\lambda}$
 and $V^{\lambda}$ in \eqref{eq:extendedUV2} 
 that $F^{\lambda}$ is not 
 the Wilczynski frame of a Demoulin surface or projective 
 minimal coincidence surface except $\lambda =1$. 
 However conjugating 
 $F^{\lambda}$ by $DF^{\lambda}D^{-1}$ with 
 $D=\di (1, \lambda, \lambda^{-1}, 1)$, 
 the frames $DF^{\lambda}D^{-1}$ give 
 a family of Wilczynski frames for Demoulin surfaces
  or projective minimal coincidence surfaces.
 The corresponding Demoulin surfaces or projective 
 minimal coincidence surfaces have the same projective metric 
 $8 bc \>dx dy$ but the different conformal classes of cubic forms
 $\lambda^{-3} b \>dx^3 + \lambda^3 c \>dy^3$. Moreover, 
 the functions $P$ and $Q$ change as $\lambda^{-2} P$ 
 and $\lambda^2 Q$, respectively.
\end{Remark}
\begin{Corollary}\label{coro:Demoulin}
 Retaining the assumptions in Theorem \ref{thm:Demoulin}, 
 the following are equivalent:
\begin{enumerate}
\item The surface $S$ is a Demoulin surface.
\item The first-order Gauss map $\gf$ 
 is a conformal Lorentz harmonic map into $\Q$.
\end{enumerate}
\end{Corollary}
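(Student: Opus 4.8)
The plan is simply to assemble Proposition \ref{prop:conformality} and Theorem \ref{thm:Demoulin}; essentially no new computation is needed.

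For the direction $(1) \Rightarrow (2)$ I would argue as follows. If $S$ is a Demoulin surface, then $S$ is in particular one of the two classes listed in statement $(1)$ of Theorem \ref{thm:Demoulin}, so the equivalence of $(1)$ and $(2)$ there yields that $\gf$ is a Lorentz harmonic map into $\Q$; and Proposition \ref{prop:conformality} yields that $\gf$ is conformal. Hence $\gf$ is a conformal Lorentz harmonic map.

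For the converse $(2) \Rightarrow (1)$ I would observe that conformality of $\gf$ alone, by Proposition \ref{prop:conformality}, already forces $S$ to be a Demoulin surface, so the Lorentz harmonicity assumed in $(2)$ is not even required. (If one prefers to use it, the argument is: Theorem \ref{thm:Demoulin} narrows $S$ down to a Demoulin surface or a projective minimal coincidence surface, and the computation in the proof of that theorem shows that in the second case $P$ and $Q$ are nonzero constants; since $\langle {\gf}_x, {\gf}_x \rangle = 16 P$ and $\langle {\gf}_y, {\gf}_y \rangle = 16 Q$ by the proof of Proposition \ref{prop:conformality}, such a $\gf$ is not conformal, leaving only the Demoulin case.)

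Since the corollary only combines two statements already established, I do not expect any genuine obstacle; the only subtlety is recognizing that ``conformal'' together with ``Lorentz harmonic'' picks out precisely the Demoulin surfaces from the two classes in Theorem \ref{thm:Demoulin}, which follows at once from the fact that a projective minimal coincidence surface has $P$ and $Q$ equal to a nonzero constant.
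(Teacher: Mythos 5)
Your proposal is correct and follows essentially the same route as the paper: conformality of $\gf$ alone forces $P=Q=0$ by Proposition \ref{prop:conformality}, hence $S$ is Demoulin, and Theorem \ref{thm:Demoulin} supplies the Lorentz harmonicity in the other direction. (Your parenthetical alternative slightly overstates the coincidence case --- the proof of Theorem \ref{thm:Demoulin} only guarantees that at least one of $P$, $Q$ is a nonzero constant --- but that still suffices to violate conformality, and in any case your main argument does not rely on it.)
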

\begin{proof}
 From Proposition \ref{prop:conformality}, it is 
 easy to see that the first-order Gauss map 
 is conformal if and only if it satisfies that $P=Q=0$, that is, 
 the surface is a Demoulin surface. Moreover, from Theorem 
 \ref{thm:Demoulin} the Gauss map of the Demoulin surface 
 is Lorentz harmonic.
\end{proof}
 Let $S$ be a Demoulin surface or projective 
 minimal coincidence surface and $F^{\lambda}$ 
 a family of frames 
 such that $(F^{\lambda})^{-1} d F^{\lambda} = 
 \alpha^{\lambda}$. 
 Then $F^{\lambda}$ will be called the {\it extended 
 Wilczynski frame} for a Demoulin surface or projective 
 minimal coincidence surface.

 We now show that the extended Wilczynski frame for a Demoulin surface 
 has an additional 
 order three cyclic symmetry.
 Let $\sigma$ be an order three automorphism on the 
 complexification of $\SL$ as follows:
$$
 \sigma X = \ad (E) X, \;\; X \in \SLC,
$$
 where $E =\di (1, \epsilon^2, \epsilon, 1)$ with 
 $\epsilon = e^{2\pi i/3 }$. Then it is easy to see that 
 $F(\lambda) (:= F^{\lambda})$ satisfies the symmetry
 $\sigma F(\lambda) =F(\epsilon \lambda)$,
 since $U(\lambda)(: = U^{\lambda})$ 
 and $V(\lambda)(:=V^{\lambda})$ satisfy 
 the same symmetry. It is also easy to see that 
 $\tau_1$ and $\sigma$ commute, and $\kappa = \tau_1 \circ \sigma$
 defines an order six automorphism. Thus, the extended Wilczynski 
 frame $F(\lambda)$ satisfies the symmetry
$$
 \kappa F(\lambda) = F(- \epsilon \lambda).
$$
 Note that $-\epsilon$ is the $6$th root of unity. 
 From the above argument, it is easy to see 
 that the extended Wilczynski frame 
 $F(\lambda)$ for a Demoulin surface 
 is an element of the twisted loop group of $\SL$:
 $$
 \LSLT = \{ g : \mathbb R^{\times} \to \SL\;|\; 
 \kappa g(\lambda) = g (-\epsilon \lambda)\}.
 $$
\begin{Theorem}\label{coro:primitivity}
 The first-order Gauss map of a Demoulin surface, which 
 is conformal Lorentz harmonic in $\Q = \GKOT$, 
 can be obtained by the natural projection 
 of a Lorentz primitive map into the $6$-symmetric space $\GKS$
 with  $K =\{\di (k_1, k_2, k_2^{-1}, k_1^{-1}) \;|\;k_1, k_2 \in \mathbb C^{\times}\}$.
\end{Theorem}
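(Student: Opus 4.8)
The plan is to produce, from the Wilczynski frame of the Demoulin surface, an explicit Lorentz primitive map into $\GKS$ whose natural projection is $\gf$. Throughout, write $\zeta=-\epsilon^{2}=e^{\pi i/3}$, a primitive sixth root of unity, and let $\sl^{\mathbb C}=\bigoplus_{j\in\mathbb Z/6\mathbb Z}\mathfrak g_{j}$ be the eigenspace decomposition of $\kappa=\tau_{1}\circ\sigma$, where $\mathfrak g_{j}$ is the $\zeta^{j}$-eigenspace.

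First I would pin down the $6$-symmetric space. Since $\tau_1$ and $\sigma$ commute, with $\kappa^{2}=\sigma^{2}$ and $\kappa^{3}=\tau_1$, one checks that $\kappa X=X$ holds if and only if $\sigma X=X$ and $\tau_1X=X$; intersecting the fixed subalgebra of $\sigma$ (the matrices commuting with $E$) with that of $\tau_1$ (the space $\kk_1$ displayed above) gives $\mathfrak g_{0}=\{\di(a,b,-b,-a)\mid a,b\in\mathbb C\}$, which is exactly the Lie algebra of $K=\{\di(k_1,k_2,k_2^{-1},k_1^{-1})\}$. Hence $K$ is the identity component of the fixed point group of $\kappa$ and $\GKS=\SLC/K$ is the associated $6$-symmetric space. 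A one-line computation gives $\di(k_1,k_2,k_2^{-1},k_1^{-1})\,\jf\,\di(k_1,k_2,k_2^{-1},k_1^{-1})^{t}=\jf$, so $K\subset K_{1}^{\mathbb C}$ and there is a natural homogeneous projection $\pi:\SLC/K\to\SLC/K_{1}^{\mathbb C}$.

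Next I would show that the Wilczynski frame is the frame of a Lorentz primitive map into $\GKS$. Let $S$ be a Demoulin surface, so $P=Q=0$ by \eqref{eq:Demoulin}, and set $F:=F(\lambda)|_{\lambda=1}$, the Wilczynski frame of $S$, with $\alpha=F^{-1}dF=U\,dx+V\,dy$. By \eqref{eq:extendedUV2} with $P=Q=0$, the extended form is $\alpha^{\lambda}=\alpha_{\kk_1}+\lambda^{-1}U_{\p_1}\,dx+\lambda\,V_{\p_1}\,dy$, and the symmetry $\kappa F(\lambda)=F(-\epsilon\lambda)$ established above gives $\kappa(\alpha^{\lambda})=\alpha^{-\epsilon\lambda}$. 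Comparing the coefficients of $\lambda^{-1},\lambda^{0},\lambda^{1}$ yields $\kappa(\alpha_{\kk_1})=\alpha_{\kk_1}$, $\kappa(U_{\p_1})=-\epsilon^{-1}U_{\p_1}=\zeta\,U_{\p_1}$ and $\kappa(V_{\p_1})=-\epsilon\,V_{\p_1}=\zeta^{-1}V_{\p_1}$; hence $\alpha_{\kk_1}$ is $\mathfrak g_{0}$-valued, $U_{\p_1}$ is $\mathfrak g_{1}$-valued and $V_{\p_1}$ is $\mathfrak g_{-1}$-valued. Thus the $(1,0)$-part $U\,dx$ of $\alpha$ lies in $\mathfrak g_{0}\oplus\mathfrak g_{1}$ and the $(0,1)$-part $V\,dy$ in $\mathfrak g_{0}\oplus\mathfrak g_{-1}$; after relabelling $\zeta$ by $\bar\zeta$ (equivalently, choosing the other generator of $\mathbb Z/6\mathbb Z$) this is precisely condition \eqref{eq:Lorentzprimitivedef}. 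Therefore $\psi:=F\cdot K:M\to\SLC/K$ is a Lorentz primitive map with frame $F$.

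Finally I would project and identify. By the second assertion of the basic proposition on Lorentz primitive maps recalled above (the homogeneous projection of a Lorentz primitive map is Lorentz equiharmonic), $\pi\circ\psi:M\to\SLC/K_{1}^{\mathbb C}$ is Lorentz equiharmonic. Under the $\SL$-equivariant identification $\Q\cong\SL/K_{1}$, $gK_{1}\mapsto g\jf g^{t}$, followed by the inclusion $\SL/K_{1}\hookrightarrow\SLC/K_{1}^{\mathbb C}$, one has $\pi\circ\psi(z)=F(z)K_{1}^{\mathbb C}\longleftrightarrow F(z)\jf F(z)^{t}=\gf(z)$, so $\gf$ is exactly the natural projection of the Lorentz primitive map $\psi$; this is consistent with Corollary \ref{coro:Demoulin}. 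The main obstacle here is bookkeeping rather than conceptual content: one has to be careful with the sixth roots of unity and the orientation convention in \eqref{eq:Lorentzprimitivedef} when matching the $\lambda$-expansion of $\alpha^{\lambda}$ with the primitive form, and one must make the chain $\Q\cong\SL/K_{1}\hookrightarrow\SLC/K_{1}^{\mathbb C}$ explicit so that $\pi\circ\psi$ is literally $\gf$. Once the symmetry $\kappa F(\lambda)=F(-\epsilon\lambda)$ is available, which the preceding discussion supplies, everything else is routine.
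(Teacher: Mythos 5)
Your proposal is correct and follows essentially the same route as the paper: identify the eigenspace decomposition for $\kappa=\tau_1\circ\sigma$, check the Lorentz primitivity condition \eqref{eq:Lorentzprimitivedef} for the (extended) Wilczynski frame with $P=Q=0$, identify the stabilizer $K$, and recover $\gf$ as the homogeneous projection of the resulting primitive map. The only real difference is presentational: where the paper splits $\mathfrak g_{0},\mathfrak g_{\pm1}$ into real and imaginary parts to argue that the projection of this particular primitive map is Lorentz harmonic into the real form $\GKOT$ rather than merely into $\GKOTC$, you reach the same conclusion by observing that $F$ takes values in $\SL$ and invoking the inclusion $\SL/K_1\hookrightarrow \SLC/K_1^{\mathbb C}$ together with Corollary \ref{coro:Demoulin}; your handling of the sixth-root labelling (choosing $-\epsilon$ versus $-\epsilon^{2}$ as the generator) is likewise just a convention check, consistent with the paper's twisting $\kappa F(\lambda)=F(-\epsilon\lambda)$.
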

\begin{proof}
 The $0$th-eigenspace and $\pm1$st-eigenspaces of 
 the derivative of the order six automorphism 
 $\kappa =\tau_1 \circ \sigma$
 are described as follows:
\begin{equation*}
 \mathfrak g_{0} = \left\{\di(a_{11}, a_{22}, - a_{22}, -a_{11})
 \;|\;a_{ij} \in \C \right\},
\end{equation*}
and 
\begin{equation*}
 \mathfrak g_{-1} = \left\{
\left.
 \begin{pmatrix} 
0 & 0 & a_{13} & 0  \\ 
a_{21} & 0 & 0 & a_{13}  \\ 
0 & a_{32} & 0& 0  \\ 
0 & 0 & a_{21} & 0  
\end{pmatrix}
\right| a_{i j} \in \C
\right\},\;\;
 \mathfrak g_{1} =
\left\{
\left.
 \begin{pmatrix} 
0 & a_{12} & 0 & 0  \\ 
0 & 0 & a_{23} & 0  \\ 
a_{31} & 0 & 0& a_{12}  \\ 
0 & a_{31} & 0 & 0  
\end{pmatrix}
\right| a_{i j} \in \C
\right\}.
\end{equation*}
 From the matrices $U^{\lambda}$ and $V^{\lambda}$
 in \eqref{eq:extendedUV2} with $P=Q=0$, we see that 
 the conditions in \eqref{eq:Lorentzprimitivedef} 
 of a Lorentz primitive map are satisfied.
 The stabilizer of $\kappa$ is 
 $$
 K =\{\di (k_1, k_2, k_2^{-1}, k_1^{-1}) \;|\;k_1, k_2 \in \mathbb C^{\times}\}.
 $$
 Therefore there is a Lorentz primitive map $g= F J F^t$ with 
 $J=E J_1$ into the $6$-symmetric space $\GKS$ such that 
 $\pi \circ g = g_1$, where $\pi$ 
 is the natural projection $\pi :\GKS \to  \GKOTC$.
 We note that the projection $\pi$ of 
 a general Lorentz primitive map into $\GKS$
 is a harmonic map into $\GKOTC$ not $\GKOT$.
 However, the Lorentz primitive map $g$ induced from the 
 first-order Gauss map $g_1$ has an additional real
 structure.
 The eigenspaces $\mathfrak g_0$ and $\mathfrak g_{\pm 1}$ can 
 be decomposed into the real and the imaginary 
 parts, that is 
 $$
 \mathfrak g_{0} = \mathfrak g_{0}^{{\rm Re}} 
 \oplus \mathfrak g_{0}^{{\rm Im}}\;\;\mbox{and}\;\;
 \mathfrak g_{\pm 1} = \mathfrak g_{\pm 1}^{{\rm Re}} 
 \oplus \mathfrak g_{\pm 1}^{{\rm Im}},
 $$  
 where $\mathfrak g_{0}^{{\rm Re}}, \mathfrak g_{\pm 1}^{{\rm Re}}$ and 
 $\mathfrak g_{0}^{{\rm Im}}, \mathfrak g_{\pm 1}^{{\rm Im}}$ consist of 
 real and imaginary entries, respectively.
 Since the first order Gauss map takes 
 values in $\Q=\GKOT$, 
 the $(1,0)$- and $(0, 1)$-parts $\alpha^{\prime}$ 
 and $\alpha^{\prime \prime}$ for the Maurer-Cartan 
 form of the map $g_1$ take
 values in $ \mathfrak g_{0}^{{\rm Re}}+\mathfrak g_{-1 }^{{\rm Re}}$ 
 and $\mathfrak g_{0}^{{\rm Re}} +\mathfrak g_{1 }^{\rm Re}$, 
 respectively.
 Therefore, the projection $\pi$ combined with $g$
 gives a Lorentz harmonic map into $\GKOT$.
\end{proof}

\begin{Remark}
 Since we obtained 
 the Lorentz primitive map into the $6$-symmetric space 
 for a Demoulin surface, 
 the generalized Weierstrass type representation as in 
 \cite{DMPW} can be established. 
\end{Remark}
\appendix
\section{Projective minimal surfaces and the conformal Gauss maps}
 \label{ap:confGauss}

\subsection{The conformal Gauss map}
 We define a map $\gs$ by 
 $$
 \gs = F \js F^t, 
 $$
 where $\js = \offdi (1, -1, -1, 1)$.
 Similar to $\gf$,  it is easy to see that $\gs$ maps 
 into the space of quadrics with signature $(2, 2)$ which is
 isomorphic to the $\Q$.
 In fact the special linear group 
 $\SL$ transitively acts on this space by $g P g^t$ 
 with $g \in \SL$ and $P\in \Q$. 
 Then  the point stabilizer at $\js \in \Q$ is given by
 $K_2 = \{ X \in \SL \;|\; X \js X^t = \js\}$, which 
 is also isomorphic to ${\rm SO}_{2, 2}$:
 \begin{equation}\label{eq:Gaussmap2}
 \gs : M \to \Q \cong \GKO = \GKSO.
 \end{equation}
 This map $\gs$ is known to be a Lie quadric which has 
 the second order contact to the surface, 
 see \cite[Section 18]{Lane}.
 We call $\gs$ the {\it conformal Gauss map} for a surface $S$
 in $\RP$, see \cite{Th, BHJ}. 
 In \cite{MN}, the conformal Gauss map $\gs$ was called 
 the projective Gauss map. 
%
%
\begin{Proposition}[Theorem 3 in \cite{BHJ}]\label{prop:conformality2}
 The conformal Gauss map $\gs$ is conformal map.
\end{Proposition}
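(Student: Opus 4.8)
The plan is to run, almost verbatim, the computation of Proposition \ref{prop:conformality}, with $\jf = \offdi(1,1,1,1)$ replaced throughout by $\js = \offdi(1,-1,-1,1)$. First I would equip the tangent spaces of $\Q$ with the same $\SL$-invariant inner product $\langle X, Y\rangle_{p} = \tr(p^{-1}Xp^{-1}Y)$; the invariance under $p \mapsto gpg^t$ was already checked in the proof of Proposition \ref{prop:conformality}. Since $\gs = F\js F^t$ and $F^{-1}dF = U\,dx + V\,dy$ with $U,V$ as in \eqref{eq:movingframeUV}, differentiating gives $\gs_x = F\,(U\js + \js U^t)\,F^t$ and $\gs_y = F\,(V\js + \js V^t)\,F^t$.

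Next, because $\js^2 = \id$ we have $(\gs)^{-1} = F^{-t}\js F^{-1}$, so the invariance of the inner product cancels the frame $F$ out of every pairing:
$$
\langle \gs_x, \gs_x\rangle = \tr\!\big(\js\,(U\js+\js U^t)\,\js\,(U\js+\js U^t)\big),
$$
and analogously for $\langle \gs_x,\gs_y\rangle$ and $\langle \gs_y,\gs_y\rangle$, replacing $U$ by $V$ where appropriate. These are now explicit polynomial expressions in $b,c,k,\ell,P,Q$ and the logarithmic derivatives $(\log b)_y$, $(\log c)_x$, and I would carry out this (routine) matrix computation. The outcome I expect — and the content of the proposition — is that, in contrast with the first-order Gauss map, the two diagonal pairings vanish identically, $\langle \gs_x, \gs_x\rangle = \langle \gs_y, \gs_y\rangle = 0$, with no side condition on $P$ and $Q$, while the off-diagonal pairing $\langle \gs_x, \gs_y\rangle$ equals, up to a nonzero constant, the Fubini--Pick invariant $bc$, hence is nonzero by the standing assumption $bc\neq 0$. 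Since the asymptotic coordinates $(x,y)$ are null for the Lorentz structure $S$ induces, this is exactly the assertion that $\gs$ is a conformal immersion, which proves the proposition. (Alternatively one may simply quote \cite[Theorem 3]{BHJ} via the Pl\"ucker embedding, but the direct frame computation keeps everything inside the present setup.)

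The thing to watch is precisely the feature that makes this proposition hold unconditionally whereas Proposition \ref{prop:conformality} requires $P=Q=0$: with $\js=\offdi(1,-1,-1,1)$ the sign pattern is arranged so that the terms involving $P$ (respectively $Q$) in $\tr(\js(U\js+\js U^t)\js(U\js+\js U^t))$ (respectively its $V$-analogue) cancel, whereas with $\jf=\offdi(1,1,1,1)$ they add up to $16P$ (respectively $16Q$). So the main — and essentially only — obstacle is the bookkeeping of these signs in the $4\times 4$ matrices; there is no conceptual difficulty, and the structure of the argument is identical to that of Proposition \ref{prop:conformality}.
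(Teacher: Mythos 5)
Your proposal is correct and follows essentially the same route as the paper: the paper likewise differentiates $\gs = F\js F^t$, uses the invariant trace inner product to strip off the frame, and finds $\gs_x = 2F\di(bQ,0,-b,0)F^t$, $\gs_y = 2F\di(cP,-c,0,0)F^t$, whence $\langle\gs_x,\gs_x\rangle=\langle\gs_y,\gs_y\rangle=0$ and $\langle\gs_x,\gs_y\rangle=4bc\neq0$, exactly as you predict. The sign cancellation you anticipate (the $P$, $Q$ terms surviving only in the $(1,1)$-entry, which is then annihilated in the trace by conjugation with $\js$) is precisely what happens.
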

\begin{proof}
 We introduce the inner product on the tangent space of $\Q$ 
 as in the proof of Proposition \ref{prop:conformality}.
 Then a direct computation shows that
$$
 {\gs}_x = 2 F \di (b Q, 0, -b, 0) F^t
 \;\;\mbox{and}\;\;
 {\gs}_y = 2 F \di (c P, -c, 0, 0) F^t.
$$
 Thus 
$$
\langle {\gs}_x, {\gs}_x \rangle = \langle  {\gs}_y,  {\gs}_y \rangle  = 0
\;\; \mbox{and} \;\;
\langle  {\gs}_x,  {\gs}_y \rangle = \langle {\gs}_y, {\gs}_x \rangle = 4 b c \neq  0.
$$
 Since the coordinates $(x, y)$ are null for the conformal 
 structure induced by $S$, 
 the conformal Gauss map $\gs$ is conformal.
\end{proof}

\subsection{Projective minimal surfaces and the conformal Gauss maps}
 Let $\tau_2$ be the outer involution on 
 $\SL$ associated to the symmetric 
 space $\Q$ in \eqref{eq:Gaussmap2} 
 defined by $\tau_2 (X) = \js X^{t-1}\js , \; X \in \SL$
 and $\js =\offdi (1, -1, -1, 1)$. 
 Abuse of notation, we denote the 
 differential of $\tau_2$ by the same symbol $\tau_2$  
 which is an outer involution on $\sl$:
\begin{equation}\label{eq:tau1}
\tau_2 (X) = - \js X^t \js, \; \;X \in \sl.
\end{equation}
 Let us consider the eigenspace decomposition
 of $\mathfrak g = \sl$ with respect to $\tau_2$, that 
 is, $\mathfrak g = \mathfrak k_2 \oplus \mathfrak p_2$,
 where $\mathfrak k_2$ is the $0$th-eigenspace 
 and $\mathfrak p_2$ is the $1$st-eigenspace as follows:
$$
 \mathfrak k_2 =\left\{\left.
\begin{pmatrix}
 a_{11} & a_{12} & a_{13} & 0\\
 a_{21} & a_{22} & 0 &  a_{13}\\
 a_{31} & 0 & -a_{22} & a_{12}\\
 0 & a_{31} & a_{21} & -a_{11}
\end{pmatrix}
\right| \;a_{ij} \in \R
\right\}, \;\; 
 \mathfrak p_2 =
\left\{\left.
\begin{pmatrix}
 a_{11} & a_{12} & a_{13} & a_{14}\\
 a_{21} & -a_{11} & a_{23} &  -a_{13}\\
 a_{31} & a_{32} & -a_{11} & -a_{12}\\
 a_{41} & -a_{31} & -a_{21} & a_{11}
\end{pmatrix}
\right| \;a_{ij} \in \R
\right\}.
$$
 According to this decomposition  
 $\mathfrak g = \mathfrak k_2 \oplus \mathfrak p_2$, 
 the Maurer-Cartan form $\alpha = F^{-1} d F = U dx + Vdy$
 can be decomposed into  
 $$
 \alpha = \alpha_{\mathfrak k_2} + \alpha_{\mathfrak p_2} =
 U_{\mathfrak k_2}dx + V_{\mathfrak k_2}dy + U_{\mathfrak p_2}dx + 
 V_{\mathfrak p_2}dy,
 $$
 where $U = U_{\mathfrak k_2} + U_{\mathfrak p_2}$ and
 $V = V_{\mathfrak k_2} + V_{\mathfrak p_2}$.
 Let us insert the parameter $\lambda \in \mathbb R^{\times}$ 
 into $U$ and $V$ as follows:
 $$
 U^{\lambda}= U_{\mathfrak k_2} + \lambda^{-1} U_{\mathfrak p_2} \;\;\mbox{and}  \;\;
 V^{\lambda} = V_{\mathfrak k_2} + \lambda V_{\mathfrak p_2}.
 $$
 Then a family of $1$-forms $\alpha^{\lambda}$ 
 is defined as follows:
\begin{equation}\label{eq:alphalambda1} 
 \alpha^{\lambda} = 
 \alpha_{\mathfrak k_2} + \lambda^{-1} \alpha_{\mathfrak p_2}^{\prime}
 + \lambda \alpha_{\mathfrak p_2}^{\prime \prime} 
 = U^{\lambda} dx + V^{\lambda} dy.
\end{equation}
 In fact the matrices $U^{\lambda}$ 
 and $V^{\lambda}$ are explicitly given 
 as follows:
\begin{equation}\label{eq:extendedUV1}
 U^{\lambda}= 
 \begin{pmatrix}
 \frac{c_x}{2 c} & P & k  &  \lambda^{-1}b Q  \\[0.1cm]
 1  & -\frac{c_x}{2 c} & 0 & k \\[0.1cm]
 0&\lambda^{-1} b&\frac{c_x}{2 c} & P \\[0.1cm]
 0 & 0 & 1 &-\frac{c_x}{2 c}
 \end{pmatrix}, \;\;
 V^{\lambda}= 
 \begin{pmatrix}
 \frac{b_y}{2 b} & \ell &  Q  & \lambda c P \\[0.1cm]
 0  & \frac{b_y}{2 b} & \lambda c & Q \\[0.1cm]
 1&0& -\frac{b_y}{2 b} & \ell \\[0.1cm]
 0 & 1 & 0 &-\frac{b_y}{2 b}
 \end{pmatrix}.
\end{equation}
 Then the projective minimal surface can be characterized 
 by the Lorentz harmonicity of the conformal Gauss map 
 \cite{Th}, \cite[Theorem 7]{BHJ},
 and by a family of flat connections.
\begin{Theorem}[\cite{Th}, Theorem 7 in \cite{BHJ}]
 \label{thm:projmini}
 Let $S$ be a surface in $\RP$ and $\gs$ the 
 conformal Gauss map defined  in \eqref{eq:Gaussmap2}.
 Moreover, let $\alpha^{\lambda}\> (\lambda
 \in \R^{\times})$  be a family of $1$-forms defined in 
 \eqref{eq:alphalambda1}. Then 
 the following are mutually equivalent:
\begin{enumerate}
\item The surface $S$ is a projective minimal surface. 
\item The conformal Gauss map $\gs$ 
 is a conformal Lorentz harmonic map into $\Q$.
\item $d + \alpha^{\lambda}$ is a family of flat connections 
 on $\D \times \SL$.
\end{enumerate}
\end{Theorem}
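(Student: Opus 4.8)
The plan is to prove Theorem~\ref{thm:projmini} by imitating, essentially verbatim, the argument already used for Theorem~\ref{thm:Demoulin}, only now with the involution $\tau_2$ (and the matrix $\js=\offdi(1,-1,-1,1)$) in place of $\tau_1$. The three-way equivalence splits naturally into two pieces: the equivalence of (2) and (3), which is a pure application of the general machinery; and the equivalence of (1) and (3), which is the computational heart and the main thing to be checked.

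For the equivalence of (2) and (3): the family of $1$-forms $\alpha^{\lambda}$ in \eqref{eq:alphalambda1} is built exactly according to the eigenspace decomposition $\sl=\kk_2\oplus\p_2$ coming from the involution $\tau_2$, and $\tau_2$ defines the symmetric space $\Q=\GKO=\GKSO$ carrying the conformal Gauss map $\gs$. Hence Theorem~\ref{thm:harmonicflat}, applied with $N=\Q$, gives immediately that $\gs$ is a Lorentz harmonic map into $\Q$ if and only if $d+\alpha^{\lambda}$ is a family of flat connections on $\D\times\SL$. (One should remark, as in the proof of Theorem~\ref{thm:Demoulin}, that the frame $F^{\lambda}$ with $(F^{\lambda})^{-1}dF^{\lambda}=\alpha^{\lambda}$ need not itself be a Wilczynski frame for $\lambda\neq 1$, but that a conjugation by a suitable diagonal $\lambda$-dependent matrix produces genuine Wilczynski frames; this does not affect the flatness statement.)

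For the equivalence of (1) and (3): I would compute the Maurer--Cartan equation $d\alpha^{\lambda}+\tfrac12[\alpha^{\lambda}\wedge\alpha^{\lambda}]=0$ directly from the explicit matrices $U^{\lambda},V^{\lambda}$ in \eqref{eq:extendedUV1}, collecting the coefficients of the various powers of $\lambda$ (here $\lambda^{0}$ and $\lambda^{\pm 1}$). The $\lambda^{0}$-coefficient should reproduce the first Codazzi-type equations \eqref{eq:comp1} (the equations $Q_x=k_y+k b_y/b$, $P_y=\ell_x+\ell c_x/c$) together with the Gauss equation relating $k,\ell$ to $b,c$ via \eqref{eq:kell}, all of which hold automatically for any surface; the surviving conditions coming from the $\lambda^{\pm1}$-coefficients should be exactly $bQ_y+2b_yQ=0$ and $cP_x+2c_xP=0$, which is precisely the projective minimality condition \eqref{eq:projmin} as computed in \cite{Th}. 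Thus the flatness of $d+\alpha^{\lambda}$ for all $\lambda$ is equivalent to $S$ being projective minimal. Note the structural difference from Theorem~\ref{thm:Demoulin}: there the more drastic off-diagonal placement of the $\lambda^{\pm1}$ terms forced the extra conditions $Q_x=P_y=0$ and $k_y+kb_y/b=0=\ell_x+\ell c_x/c$, collapsing the class to Demoulin plus coincidence surfaces; here only the two minimality equations appear, so one lands on the full class of projective minimal surfaces. I should double-check that the compatibility equation \eqref{eq:comp2}, $bQ_y+2b_yQ=cP_x+2c_xP$, is automatically subsumed once both sides vanish separately.

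The main obstacle is the bookkeeping in the Maurer--Cartan computation: verifying that, upon inserting $\lambda$ as in \eqref{eq:extendedUV1}, the $\lambda$-independent part of the integrability condition is precisely the ordinary projective Gauss--Codazzi system \eqref{eq:comp1}--\eqref{eq:comp2} (so imposes nothing new on a genuine surface), while the genuinely new content isolated in the $\lambda^{\pm1}$ parts is exactly \eqref{eq:projmin}. This is the step where a sign error or a misplaced entry would change the answer, so it warrants a careful direct check against the known result of Thomsen \cite{Th} and \cite[Theorem 7]{BHJ}; since the conclusion is already in the literature, the computation is a confirmation rather than a discovery, but it must be done honestly. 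Everything else is a direct invocation of Theorem~\ref{thm:harmonicflat} and of the conformality already established in Proposition~\ref{prop:conformality2}, which upgrades ``Lorentz harmonic'' to ``conformal Lorentz harmonic'' in statement~(2) at no extra cost.
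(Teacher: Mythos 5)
Your proposal is correct and follows essentially the same route as the paper: the equivalence of (2) and (3) is a direct application of Theorem~\ref{thm:harmonicflat} for the symmetric space defined by $\tau_2$, and the equivalence of (1) and (3) comes from expanding the Maurer--Cartan equation for $\alpha^{\lambda}$, where the $\lambda^{\mp 1}$-coefficients of the $(1,4)$-entry isolate exactly the two equations of \eqref{eq:projmin} while the rest reproduces \eqref{eq:comp1}. Your explicit appeal to Proposition~\ref{prop:conformality2} for the word ``conformal'' in (2) is a point the paper leaves implicit, but it is the right justification.
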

\begin{proof}
 Let us compute the flatness conditions of $d + \alpha^{\lambda}$, 
 that is, the Maurer-Cartan equation $ d \alpha^{\lambda} + 
 \tfrac{1}{2}[\alpha^{\lambda} \wedge \alpha^{\lambda}] =0$. 
 It is easy to see that except the $(1, 4)$-entry, the 
 Maurer-Cartan equation is equivalent to \eqref{eq:comp1}.
 Moreover, the $\lambda^{-1}$-term and the $\lambda$-term 
 of the $(1, 4)$-entry are equivalent to 
 that the first equation and the second equation 
 in \eqref{eq:projmin}, respectively. 
 Thus the quivalence of $(1)$ and $(3)$ follows.
 
 The equivalence of $(2)$ and $(3)$ follows from Theorem
 \ref{thm:harmonicflat}, since the family of 
 $1$-forms $\alpha^{\lambda}$ is given 
 by the involution $\tau_2$ and it defines 
 the symmetric space $\Q = \GKO$.
\end{proof}
 The above theorem implies that if $S$ is a projective 
 minimal surface, then there exists a family of projective 
 minimal surface $S^{\lambda}\> (\lambda \in \mathbb R^{\times})$
 such that $S^{\lambda}|_{\lambda =1}=S$. Projective minimal surfaces of 
 the family have the same projective metric $8bc \> dx dy$
 but the different conformal classes of 
 cubic forms $\lambda^{-1} b \>dx^3 
 + \lambda c \>dy^3$. 
 Thus the family of the Maurer-Cartan form $\alpha^{\lambda}$
 defines a family of Wilczynski frames $F^{\lambda}$ such 
 that $(F^{\lambda})^{-1} d F^{\lambda} = \alpha^{\lambda}$. 
 It is easy to see that $F^{\lambda}$ is an element of 
 the twisted loop group of $\SL$:
 $$
 \LSLO = \{ g : \mathbb R^{\times} \to \SL\;|\; 
 \tau_2 g(\lambda) = g (-\lambda)\}.
 $$
 This family of Wilczynski frames $F^{\lambda}$ will be called 
 the {\it extended Wilczynski frame} for a projective minimal  
 surface.

\bibliographystyle{plain}
\def\cprime{$'$}

\end{document}